%
%
%
%
%
\RequirePackage{fix-cm}
\documentclass[onecolumn]{svjour3}          
\smartqed  
\usepackage{graphicx}
\usepackage{amssymb}

\usepackage{amssymb,amsmath,graphicx,epsfig,psfrag}

 

\def\ds{\displaystyle}

\newcommand{\bw}{{{\bf{  w}}}}

%
%
%
%
\begin{document}

\title{ALE-SUPG finite element method for convection-diffusion problems in time-dependent domains: Non-conservative form}


\author{ Sashikumaar~Ganesan \and Shweta Srivastava$^*$ }


\institute{S. Ganesan \and S. Srivastava$^*$   \at 
 \email{sashi@serc.iisc.in}           
                    \at
           \email{shweta@nmsc.serc.iisc.in}\\
              Numerical Mathematics and Scientific Computing, \\
Department of Computational and Data Sciences, Indian Institute of Science\\
 Bangalore  560012, India.\\
              Tel.: +91-80-22932902 \\
              Fax: +91-80-23602648 \\             
}

\date{Received: date / Accepted: date}

\maketitle
\begin{abstract}
Stability estimates for Streamline Upwind Petrov-Galerkin (SUPG) finite element method with different time integration schemes for the solution of a scalar transient convection-diffusion-reaction equation in a time-dependent domain are derived. The deformation of the domain is handled with the arbitrary Lagrangian-Eulerian (ALE) approach. In particular, the non-conservative form of the ALE scheme is considered. The implicit Euler, the Crank-Nicolson, and the backward-difference~(BDF-2) methods are used for temporal discretization. It is shown that the stability of the semi-discrete (continuous in time) ALE-SUPG equation is independent of the mesh velocity, whereas the stability of the fully discrete problem is only conditionally stable. The theoretical considerations are illustrated by a numerical example. Further, the dependence of numerical solution on the choice of stabilization parameter $\delta_k$ is also presented. 
\end{abstract}

\keywords{Convection-diffusion-reaction equation \and boundary and interior layers\and streamline upwind Petrov-Galerkin (SUPG) method\and arbitrary Lagrangian-Eulerian approach \and finite elements}

\section{Introduction} 
Solution of a transient convection-diffusion-reaction equation in a  time-dependent domain is highly demanded in many applications. The scalar variable can be the temperature, concentration or density etc. of a species.  Numerical approximations of the scalar partial differential equations become more challenging when the equation is convection dominated. Further, the computations become more complex, when the domain contains moving boundaries e.g., fluid-structure interactions (FSI) problems. In addition to a stabilized numerical method, an efficient approach is necessary to handle the mesh movement for a convection dominated convection-diffusion equation in time-dependent domains.
 
In general, the standard Galerkin finite element approximation induces spurious oscillations in the numerical solution of a convection dominated convection-diffusion equation. Therefore, stabilization schemes such as streamline upwind Petrov-Galerkin (SUPG)\cite{BH82,BUR10,JOH10}, the local projection stabilization (LPS)~\cite{BRA06,GAN10,MAT07}, the continuous interior penalty method (CIP)~\cite{BUR07,BUR04}, the subgrid scale modeling (SGS)~\cite{Guermond99}, and the orthogonal subscales method (OSS)~\cite{CODINA2000,codina98} have been proposed and analyzed  in the literature. Each of these methods has its own advantages and disadvantages. Nevertheless, almost all stabilization methods introduce an unknown numerical parameter that controls the stability. The choice of the parameter for SUPG stabilization scheme has been studied in~\cite{JOH10,TEZ86,TEZ2000,FRANCA92}.

The above mentioned stabilization methods are mostly studied only for PDEs in stationary domains. In this paper,  the SUPG finite element scheme for computations of  transient convection-diffusion equation in time-dependent domains  is presented. Further, the arbitrary Lagrangian-Eulerian (ALE) approach is used to handle the moving boundaries and the time-dependent domain.  
The ALE formulation introduces a convective domain/mesh velocity term into the  model equation, and hence it alters the overall convective field of the problem. Nevertheless, the model problem can still be convection dominated and can have boundary/interior layers even after reformulating into an ALE form. The stability estimates for the conservative ALE-SUPG approach with implicit Euler and Crank-Nicolson time discretizations have been presented in our earlier work~\cite{shw015}. The considered non-conservative ALE-SUPG formulation avoids the necessity of the  Reynolds identity. 
The stability estimates for the implicit Euler, Crank-Nicolson and backward difference time discretizations with inconsistent SUPG for non-conservative ALE form of the convection-diffusion-reaction equation in time-dependent domains are derived.

The paper is organized as follows. In section 2, the transient convection-diffusion equation in a  time-dependent domain and its  ALE formulation are given. The spatial discretization using the SUPG finite element method is also presented in this section. Further, the stability of the semi-discrete (in space) problem is derived. Section 3 is devoted to the stability estimates of the fully discrete problem with implicit Euler, Crank-Nicolson and backward difference (BDF-2) discretization in time.
The numerical results are presented in Section 4. Finally, a brief summary is presented in Section 5. 

\section{Problem statement}
 Let $\Omega_t$ be a time-dependent bounded domain in $R^{d},~d={2,3}$ with Lipschitz boundary $\partial\Omega_t$ for each $t \in [0,\rm{T}]$. Here, $T$ is a given end time. Consider a transient convection-diffusion-reaction equation : find $u(t,x) :  (0,\rm{T}] \times \Omega \rightarrow \mathbb{R}$  
\begin{equation}\label{model}
\begin{array}{rcll}
\ds\frac{\partial u}{\partial t} - \epsilon\Delta u +  \mathbf b \cdot\nabla u + cu &=& f  &\qquad \text{ in} \,\ (0,\rm{T}] \times \Omega_t,\\
                 u &=& 0 &\qquad   \text{ on} \,\ [0,\rm{T}] \times \partial\Omega_t,\\
                 u(0,x) &=& u_{0}(x) & \qquad  \text{  in} \,\ \Omega_0, 
\end{array}
\end{equation}
 Here, $u(t,x)$ is an unknown scalar function, $u_{0}(x)$ is a  given initial data, $\epsilon$ is the diffusivity constant, $ \mathbf b(t,x)$ is the convective flow velocity, $c(t,x)$  is a reaction function, $f(x)$ is a given source term with $ ~f \in L^2(\Omega_t)$. 

\subsection{ALE formulation}
Let $\hat\Omega$  be a reference domain. The reference domain  $\hat\Omega$ can simply be the initial domain $\Omega_0$ or the previous time-step domain, when the deformation of the domain is large. Let $\mathcal{A}_t$ be a family of bijective ALE mappings, which at each time $t \in (0,\rm{T}]$, maps a point $Y$ of a reference domain $\hat\Omega$ to a point on the current domain $\Omega_t$, given by
\[
\mathcal{A}_t:\hat{\Omega} \rightarrow \Omega_t,  \qquad  \mathcal{A}_t(Y)= x(Y, t), \qquad t\in(0,\rm{T}).
\]
Further, for any time $t_1,t_2 \in [0,\rm{T}]$, the ALE mapping between two time levels will be given by,
\[
 \mathcal{A} : \Omega_{t_{1}} \rightarrow \Omega_{t_{2}} \qquad \qquad \mathcal{A}_{t_1, t_2} = \mathcal{A}_{t_2} \circ \mathcal{A}_{t_1}^{-1}
\]
The domain velocity $\bw$ is defined as 
\[
 \bw (x,t) = \frac{\partial x}{\partial t}\Big|_{Y}(\mathcal{A}_t^{-1}(x),t).
\]
We assume that  $\Omega_t$ is bounded with Lipschitz continuous boundary for each  $t\in[0,\rm{T}]$.  For a  function $g\in C^0({{\Omega_t}})$ on the Eulerian frame, we define the corresponding function $\hat g\in C^0({{\hat\Omega} })$ on the ALE frame as 
\[
\hat g :\hat\Omega\times (0, {\rm{T}}) \rightarrow \mathbb{R}, \qquad 
 \hat{g}:=g\circ \mathcal{A}_t,\qquad \text{with} \qquad \hat{g}(Y, t) = g(\mathcal{A}_t(Y), t).
\]
The temporal derivative on the ALE frame is defined as 
\[
\ds\frac{\partial g}{\partial t} \Big|_{Y}:\Omega_t\times (0, {\rm{T}}) \rightarrow \mathbb{R}, \qquad 
\ds\frac{\partial g  }{\partial t} \Big|_{Y }(x,t) = \ds\frac{\partial \hat g }{\partial t}(Y,t), \qquad 
 Y= \mathcal{A}_t^{-1}(x).
 \]
Applying the chain rule to the time derivative of $g\circ\mathcal{A}_t$ on the ALE frame to get
\[
\ds\frac{\partial g}{\partial t} \Big|_{Y} = \ds\frac{\partial g}{\partial t} (x,t)
 + \ds\frac{\partial x}{\partial t}\Big|_{Y}\cdot\nabla_x g  = \ds\frac{\partial g}{\partial t}  
 + \ds\frac{\partial \mathcal{A}_t(Y)}{\partial t}  \cdot\nabla_x g= \ds\frac{\partial g}{\partial t}  
 + \bw\cdot\nabla_x g,
\]
By using the relation in the model problem~\eqref{model}, we get
\begin{equation}\label{ALEmodel}
\ds\frac{\partial u}{\partial t} \Big|_{Y} - \epsilon\Delta u +  (\mathbf b -\bw)\cdot\nabla u + cu = f.
\end{equation}
This equation is the ALE counterpart of the model equation~\eqref{model}. The difference between the equations~\eqref{model} and~\eqref{ALEmodel} is the additional domain velocity in the ALE form that accounts for the deformation of the domain.

\subsection{Variational form}
In this section, the finite element variational form of the ALE equation \eqref{ALEmodel} is derived. Let 
\[
 V = \left\{v\in  H_{0}^{1}(\Omega_{t}),  ~~v:\Omega_t\times (0,\rm{T}] \rightarrow \mathbb{R}, ~~ v=\hat{v}  \circ A_{t}^{-1}, ~~ \hat{v} \in H_{0}^{1}(\hat{\Omega}) \right\}.
\]
be the solution space for equation~\eqref{ALEmodel}.
Multiplying equation~\eqref{ALEmodel} with a test function $v\in V$ and applying integration by parts to the higher order derivative term, the variational form of the equation~\eqref{ALEmodel} becomes:\\

\noindent For given $\mathbf b$, $\bw$, $c$, $u_0$, $\Omega_0$ and $f$, find $u\in V$ such that for all $ t\in(0,\rm{T}]$
\begin{equation}\label{weakALE}
\left(\frac{\partial u}{\partial t} \Big|_{Y},~v\right) + (\epsilon\nabla u, ~\nabla v) + \left(  (\mathbf b -\bw) \cdot \nabla u ,~v\right) + \left(cu, ~v \right) = (f,~v),  \qquad  v\in V.
\end{equation}
Here,  $(\cdot,~\cdot)$ denotes the $L^2-$inner product in $\Omega_t$. 

\subsection{SUPG finite element space discretization} The stability estimates for the standard Galerkin solution of~\eqref{weakALE} can be found in~\cite{BO04,NOB01,MAC12}. It has been shown that the stability inequality is independent of domain velocity. However, the Galerkin approximation suffers instabilities for convection dominant scalar equations of type~\eqref{weakALE}. In order to overcome this, we use the SUPG discretization for the considered ALE equation~\eqref{weakALE}. Note that the convective velocity in the ALE form is $(\mathbf b -\bw)$, whereas the convective velocity in fixed domain will be $ \textquoteleft\textquoteleft {\mathbf b} \textquotedblright$.

Let $\mathcal{T}_{h,0}$ be the triangulation of initial domain $\Omega_0$. For each $t \in (0,\rm{T}]$, $\mathcal{T}_{h,t}$ denote the family of shape regular triangulations of $\Omega_t$ into simplices obtained by triangulating  the time-dependent domain $\Omega_t$. We denote the diameter of the cell $K  \in\mathcal{T}_{h,t}$ by $h_{K,t}$ and the global mesh size in the triangulated  domain $\Omega_{h,t}$   by $h_t:=\max\{h_{K,t}~:~ K \in\mathcal{T}_{h,t}\}$.   Suppose $V_{h}\subset V$  is a  conforming finite element (finite dimensional) space. Let $\phi_h:=\phi_i(x) $, $i=1,2,...,\mathcal{N},$ be the finite element basis functions of $V_{h}$. The discrete finite element space $V_{h}$ is then defined by  
\[
  V_{h}  = 
\left\{v_h \in C(\overline{\Omega_t}):~v_h|_{\partial \Omega_t}= 0;~ v_h |_K \in {P_k}(K)\right\}\subset
H_0^{1}(\Omega_t).
\]
where ${ P_k}$ is the set of polynomials of degree less than or equal to $k$ for discretization of the ALE mapping in space. We next define the discrete ALE mapping $\mathcal{A}_{h,t}(Y)$ and the mesh velocity $\bw_h$ in space. We use the Lagrangian finite element space
\[
 \mathcal{L}^{k}(\hat\Omega) = \left\{  \psi\in H^k(\hat \Omega):  \psi|_K\in{ P_k}(\hat K)  \text{ for all } \hat K\in  \hat \Omega_h \right\},
\]
 Using the linear space, we define the semidiscrete ALE mapping in space for each $t\in[0,{\rm{T})}$ by
\begin{equation} \label{discALE}
 \mathcal{A}_{h,t}:\hat{\Omega}_{h} \rightarrow \Omega_{h,t}.
\end{equation}
Further, the semidiscrete (continuous in time) mesh velocity $\bw_h(t,Y)\in \mathcal{L}^{1}(\hat\Omega)^d$ in the   ALE frame for each $t\in[0,{\rm{T})}$ is defined   by
\[
 \hat\bw_h(t,Y)=   \sum_{i=1}^{\mathcal{M}}\bw_{i}(t)
\psi_i(Y) ;\quad \bw_{i}(t)\in\mathbb{R}^d. 
\]
Here, $\bw_{i}(t)$ denotes the mesh velocity of the $i^{th}$ node of simplices at time $t$, and $\psi_i(Y) $, $i=1,2,...,\mathcal{M},$ are the  basis functions of $\mathcal{L}^{1}(\hat\Omega_h)$. We then define the semidiscrete mesh velocity in the Eulerian frame as 
\[
  \bw_h(t,x) =  \hat\bw_h  \circ\mathcal{A}^{-1}_{h,  t}(x).
\]
Using the above finite element spaces and applying the inconsistent SUPG finite element discretization to the ALE form~\eqref{ALEmodel}, the  semi-discrete form in space reads:\\

\noindent For a given $u_h(x,0)=u_{h,0}$,  $\mathbf b$, $\bw_h$, $c$, $\Omega_{h,0}$  and $f$, find $ u_h(t,x)\in   V_{h}$ such that for all $ t\in(0,\rm{T}]$
\begin{equation} \label{semidisc}
\begin{aligned} 
\left (\left. \frac{\partial u_h}{\partial t} \right \arrowvert_{Y},  v_h\right)    & + a_{SUPG}(u_h,v_h)    -  \int_{\Omega_{h,t}} \bw_h \cdot \nabla u_h~ v_h~  dx \\ 
   & = \int_{\Omega_{h,t}} fv_h~   dx 
+ \sum_{K  \in \mathcal{T}_{h,t}} \delta_{K}  \int_{K} f ~(\mathbf{b-w}_h) \cdot\nabla v_h ~ dK
\end{aligned}
\end{equation}
where
\begin{align}\label{supg}
 a_{SUPG}(u,v)  &= \epsilon(\nabla u, \nabla v) +  (\mathbf{b} \cdot \nabla u, v) + (c u, v) \nonumber\\
 &+\sum_{K \in \mathcal{T}_{h,t}} \delta_{K} (- \epsilon\Delta u + (\mathbf{b-w}_h) \cdot\nabla u + cu, (\mathbf{b-w}_h ) \cdot \nabla v)_K
\end{align}
Here, $\delta_{K}$ is the local stabilization parameter, whose value depends on the
mesh size and the convective velocity. Further, $u_{h,0}\in V_{h}$ is defined as the $L^2$-projection of the initial value $u_0$ onto $V_{h}$. 
 
\begin{lemma} \label{lemma1}
Coercivity of $a_{SUPG}(\cdot , \cdot)$: Assume that there exists a constant $\mu$ such that
\begin{equation}\label{assump1}
  \left(c - \frac{1}{2}\nabla \cdot \mathbf{b}\right)(x) \geq \mu > 0, \quad \forall~x \in \Omega_t.\\
\end{equation}
 Let the discrete form of the assumptions~\eqref{assump1} be satisfied. Further, assume that the SUPG parameter satisfies
\begin{equation}\label{assump2}
 \delta_{K} \le  \frac{\mu_{0}}{2||c||_{K,\infty}^{2}}, \qquad    \delta_{K} \le  \frac{h_{K}^{2}}{2\epsilon c_{inv}^{2}},
\end{equation}
where $c_{inv}$ is a constant used in the inverse inequality.  Then, the SUPG bilinear form  satisfies
\[
 a_{SUPG}(u_h,u_h) \geq \frac{1}{2}|||u_h|||^{2},
\]
where the mesh dependent norm is defined as
\[
 |||u|||^{2} = \left(\epsilon| u|^{2}_{1}  + \sum_{K \in \mathcal{T}_{h,t}} \delta_{K} ||(\mathbf{b-w}_h) \cdot \nabla u||^{2}_{0,K} + \mu||u||_{0}^{2}\right).
\]
\end{lemma}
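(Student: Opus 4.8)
The plan is to expand $a_{SUPG}(u_h,u_h)$ directly from definition~\eqref{supg} and show that the "good" terms dominate the cross terms coming from the stabilization. First I would split $a_{SUPG}(u_h,u_h)$ into the Galerkin part $\epsilon|u_h|_1^2 + (\mathbf b\cdot\nabla u_h,u_h) + (cu_h,u_h)$ and the SUPG sum $\sum_K \delta_K(-\epsilon\Delta u_h + (\mathbf{b-w}_h)\cdot\nabla u_h + cu_h,\,(\mathbf{b-w}_h)\cdot\nabla u_h)_K$. For the Galerkin part, I would integrate by parts in the convective term, $(\mathbf b\cdot\nabla u_h,u_h) = -\tfrac12(u_h,(\nabla\cdot\mathbf b)u_h)$ using $u_h|_{\partial\Omega_t}=0$, so that together with $(cu_h,u_h)$ it yields $\bigl((c-\tfrac12\nabla\cdot\mathbf b)u_h,u_h\bigr)\geq \mu\|u_h\|_0^2$ by the discrete form of assumption~\eqref{assump1}. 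This accounts for the $\epsilon|u_h|_1^2$ and $\mu\|u_h\|_0^2$ contributions with their full weight.

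Next I would handle the SUPG sum. The term $\sum_K \delta_K\|(\mathbf{b-w}_h)\cdot\nabla u_h\|_{0,K}^2$ is exactly the streamline term in the triple norm and appears with coefficient $1$; I need to show the two remaining cross terms absorb at most half of the available budget. For the reaction cross term $\sum_K \delta_K(cu_h,(\mathbf{b-w}_h)\cdot\nabla u_h)_K$, apply Cauchy--Schwarz and Young's inequality in the form $|(cu_h,(\mathbf{b-w}_h)\cdot\nabla u_h)_K| \le \tfrac{1}{2\eta}\|cu_h\|_{0,K}^2 + \tfrac{\eta}{2}\|(\mathbf{b-w}_h)\cdot\nabla u_h\|_{0,K}^2$; bounding $\|cu_h\|_{0,K}\le\|c\|_{K,\infty}\|u_h\|_{0,K}$ and choosing $\eta$ appropriately, the first assumption in~\eqref{assump2}, $\delta_K\le \mu_0/(2\|c\|_{K,\infty}^2)$, lets this be controlled by a fraction of $\mu\|u_h\|_0^2$ plus a quarter of the streamline term. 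For the diffusion cross term $-\sum_K\delta_K(\epsilon\Delta u_h,(\mathbf{b-w}_h)\cdot\nabla u_h)_K$, I would use the inverse inequality $\|\Delta u_h\|_{0,K}\le c_{inv}h_K^{-1}\|\nabla u_h\|_{0,K}$, then Young's inequality, so that this term is bounded by $\sum_K \delta_K\epsilon^2 c_{inv}^2 h_K^{-2}\|\nabla u_h\|_{0,K}^2$ (times $\tfrac12$) plus another quarter of the streamline term; the second bound in~\eqref{assump2}, $\delta_K\le h_K^2/(2\epsilon c_{inv}^2)$, reduces the first piece to at most $\tfrac14\epsilon|u_h|_1^2$. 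Collecting everything leaves at least $\tfrac12\epsilon|u_h|_1^2 + \tfrac12\sum_K\delta_K\|(\mathbf{b-w}_h)\cdot\nabla u_h\|_{0,K}^2 + \tfrac12\mu\|u_h\|_0^2 = \tfrac12|||u_h|||^2$, which is the claim.

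The main obstacle is the bookkeeping of the Young's-inequality constants: each cross term must be split so that the streamline contributions sum to no more than $\tfrac12\sum_K\delta_K\|(\mathbf{b-w}_h)\cdot\nabla u_h\|_{0,K}^2$ while the leftover $\|u_h\|_0^2$ and $|u_h|_1^2$ pieces are simultaneously absorbable by $\tfrac12\mu\|u_h\|_0^2$ and $\tfrac12\epsilon|u_h|_1^2$ respectively. This forces the precise form of the hypotheses~\eqref{assump2} (and the appearance of $\mu_0$, presumably a lower bound related to $\mu$), so I would keep the free Young parameter symbolic until the end and then pick it to make both halves of the budget balance. A minor subtlety is that $\nabla\cdot\mathbf b$ and $c$ are evaluated on the moving domain $\Omega_t$, so the coercivity constant $\mu$ and $\|c\|_{K,\infty}$ are time-dependent; the estimate is understood pointwise in $t$, which is consistent with the triple norm itself carrying the time-dependent mesh $\mathcal T_{h,t}$.
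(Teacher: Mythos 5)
Your argument is correct and is precisely the standard SUPG coercivity proof: integrate the convective term by parts to invoke assumption~\eqref{assump1}, then absorb the two stabilization cross terms via Cauchy--Schwarz, Young's inequality, the inverse inequality, and the two bounds in~\eqref{assump2}. The paper itself gives no proof here but simply cites~\cite{shw015}, where this same argument is carried out, so your route coincides with the intended one; just fix the Young parameters consistently (e.g.\ $ab\le a^2+\tfrac14 b^2$ for each cross term) so the streamline losses total $\tfrac12\sum_K\delta_K\|(\mathbf{b}-\bw_h)\cdot\nabla u_h\|_{0,K}^2$ and the diffusive and reactive losses stay within $\tfrac12\epsilon|u_h|_1^2$ and $\tfrac12\mu\|u_h\|_0^2$.
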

\begin{proof} Coercivity of the bilinear form has already been proved in~\cite{shw015}.
\end{proof}

\begin{lemma} 
Stability of the semi-discrete (continuous in time) problem: Let the discrete version of~\eqref{assump1} and the assumption~\eqref{assump2} on $\delta_K$ hold true. Then, the solution of the problem~\eqref{semidisc} satisfies,
\[
 ||u_h||^2_{0}  +  \frac{1}{2} \int_0^T  |||u_h|||^2 dt  \leq ||u_h(0)||^2_{0} +\frac{2}{\mu} \int_0^T
 ||f||_{0}^{2} ~dt + 2\int_0^T \sum_{K \in \mathcal{T}_{h,t}}\delta_{K} ||f||_{0}^{2}~ dt  
\]
\end{lemma}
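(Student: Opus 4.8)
The plan is to derive an energy identity by testing the semi-discrete equation~\eqref{semidisc} with $v_h=u_h$, to collapse the ALE time-derivative term and the explicit mesh-convection term into a material derivative of the $L^2$-norm, and then to close the estimate with the coercivity of Lemma~\ref{lemma1}, Young's inequality, and integration in time.

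First, setting $v_h=u_h$ in~\eqref{semidisc}, the decisive point is that the mesh velocity drops out of the leading terms. Differentiating $\|u_h\|_{0,\Omega_{h,t}}^2$ along the discrete ALE map (Reynolds transport theorem on $\Omega_{h,t}$) gives
\[
\left(\left.\frac{\partial u_h}{\partial t}\right|_Y,\,u_h\right)=\frac12\frac{d}{dt}\|u_h\|_0^2-\frac12\int_{\Omega_{h,t}}(\nabla\cdot\bw_h)\,u_h^2\,dx ,
\]
while integrating by parts in the explicit mesh-convection term, and using $u_h=0$ on $\partial\Omega_{h,t}$, yields
\[
-\int_{\Omega_{h,t}}\bw_h\cdot\nabla u_h\,u_h\,dx=\frac12\int_{\Omega_{h,t}}(\nabla\cdot\bw_h)\,u_h^2\,dx .
\]
Adding the two, the $\nabla\cdot\bw_h$ contributions cancel exactly — this is the mechanism that renders the semi-discrete stability independent of $\bw_h$ — leaving the energy identity
\[
\frac12\frac{d}{dt}\|u_h\|_0^2+a_{SUPG}(u_h,u_h)=(f,u_h)+\sum_{K\in\mathcal{T}_{h,t}}\delta_K\,\bigl(f,(\mathbf{b}-\bw_h)\cdot\nabla u_h\bigr)_K .
\]

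Next I would invoke Lemma~\ref{lemma1} to bound $a_{SUPG}(u_h,u_h)\ge\tfrac12|||u_h|||^2$ and estimate the right-hand side by Cauchy--Schwarz and Young's inequality, choosing the weights so that the $L^2$-term is controlled against the $\mu\|u_h\|_0^2$ part of $|||u_h|||^2$ and the stabilization term against the $\sum_K\delta_K\|(\mathbf{b}-\bw_h)\cdot\nabla u_h\|_{0,K}^2$ part. Taking the weight in the first term to absorb $\tfrac{\mu}{4}\|u_h\|_0^2$ and the cellwise weight in the second to absorb $\tfrac14\sum_K\delta_K\|(\mathbf{b}-\bw_h)\cdot\nabla u_h\|_{0,K}^2$ leaves a clean quarter of the energy norm, $\tfrac14|||u_h|||^2$, on the left and produces precisely the constants $2/\mu$ and $2$:
\[
\frac12\frac{d}{dt}\|u_h\|_0^2+\frac14|||u_h|||^2\le\frac1\mu\|f\|_0^2+\sum_{K\in\mathcal{T}_{h,t}}\delta_K\|f\|_{0,K}^2 .
\]
Integrating over $(0,T)$, using $u_h(0)=u_{h,0}$, and multiplying by $2$ then gives the asserted inequality (with $\|u_h\|_0^2$ read at $t=T$, or as the supremum over $t$).

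The step requiring genuine care is the cancellation in the second paragraph: it hinges on the discrete Reynolds transport theorem for the moving domain $\Omega_{h,t}$ and on the fact that $\bw_h$ and $u_h$ are globally continuous piecewise polynomials, so that no inter-element jump terms arise in the integration by parts and the boundary term vanishes because $u_h$ vanishes on the (moving) boundary. Everything else is routine: no Gr\"onwall argument is needed, and only the discrete analogue of assumption~\eqref{assump1} on the physical field $\mathbf{b}$ — already used in Lemma~\ref{lemma1} — enters, nothing being required of $\bw_h$.
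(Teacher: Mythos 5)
Your proof is correct and follows essentially the same route as the paper: the same Reynolds-transport identity for the ALE time derivative and the same integration by parts of the mesh-convection term (so the $\nabla\cdot\bw_h$ contributions cancel), followed by coercivity, Young's inequality with the weights you chose, and integration in time. The paper merely states the two identities and defers the rest to the conservative-case argument of~\cite{shw015}, so your write-up is in fact a more complete version of the intended proof.
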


\begin{proof}
Using the relations
\begin{equation*}
 \int_{\Omega_{h,t}} \left.\frac{\partial u_h}{\partial t} \right \arrowvert_{Y}u_h~ d x = \frac{1}{2}\left( \frac{d}{dt}||u_h||_{0}^2 - \int_{\Omega_{h,t}} u_h^2 \nabla \cdot \mathbf{w_h} d x\right)
\end{equation*}
and
\begin{equation*}
  \int_{\Omega_{h,t}}\mathbf{w_h}\cdot \nabla u_h~u_h ~dx = -\frac{1}{2}\int_{\Omega_{h,t}} u_h^2 ~\nabla \cdot \mathbf{w}_h ~d x
\end{equation*}
for equation~\eqref{semidisc} and following the similar procedure, as in the case of conservative ALE formulation~\cite{shw015} in section $(3)$, the stability estimate for the semi-discrete problem can be derived. Hence the stability properties are not affected by domain velocity field in the semi-discrete problem\label{semidisc}. However, we may not expect the same result to be true for the fully discrete case.
\end{proof}
\section{Fully discrete scheme} In this section, the stability estimates for the fully discrete ALE-SUPG form is derived. First, we consider the first order implicit Euler for the temporal discretization and then the second order modified Crank-Nicolson, and backward-difference (BDF-2) method.

Consider the partition of time interval $[0,\rm{T}]$ as $0=t^0<t^1<\dots <t^N=\rm{T}$
into $N$ equal time intervals. Let us denote the uniform time step by $\Delta t = \tau^n   $ = $t^{n}$ - $t^{n-1}$, $1\le n \le N$. Further, let   $u_h^n$ be the
approximation of $u(t^n,x)$ in $V_{h}\subset
H_0^{1}(\Omega_{t^n})$, where $\Omega_{t^n}$ is the deforming domain at time $t=t^n$. We first discretize  the ALE mapping in time using a linear interpolation. We denote the discrete ALE mapping by 
$\mathcal{A}_{h, \Delta t}$, and define it for every $\tau\in[t^n,t^{n+1}]$ by
\[
 \mathcal{A}_{h, \Delta t}(Y) = \frac{\tau-t^n}{\Delta t}\mathcal{A}_{h,t^{n+1}}(Y) +  \frac{t^{n+1}-\tau }{\Delta t}\mathcal{A}_{h,t^{n}}(Y),
\]
where $\mathcal{A}_{h,t}(Y)$ is the time continuous ALE mapping defined in~\eqref{discALE}. Since the ALE mapping is discretized in time using a linear interpolation, we obtain the discrete mesh velocity 
\begin{equation}
  \hat\bw_h^{n+1}(Y)= \frac{\mathcal{A}_{h,t^{n+1}}(Y) - \mathcal{A}_{h,t^{n}}(Y)}{\Delta t} = \frac{x_h^{n+1}- x_h^{n}}{\Delta t}
\end{equation}
as a piecewise constant function in time. Further, we define the mesh velocity on the Eulerian frame as
\[
  \bw_h^{n+1} =  \hat\bw_h^{n+1}  \circ\mathcal{A}^{-1}_{h,\Delta t}(x).
\]
Further, the integrals $u_h^n$ on a domain $\Omega_{t^s}$ with $t^s \neq t^n$ is written through the ALE mapping
\[
 \int_{\Omega_{t^s}} u_h^n ~dX \colon =  \int_{\Omega_{t^s}} u_h^n \circ \mathcal{A}_{t^n, t^s}~ dX.
\]

\subsection{Discrete ALE-SUPG with Implicit Euler time discretization method}
Applying the backward Euler time discretization to the semi-discrete problem~\eqref{semidisc}, the discrete form  reads: 
\begin{equation} \label{disc}
\begin{aligned} 
&\left (\ds\frac{  u^{n+1}_h - u^n_h}{\Delta t},  v_h\right)_{\Omega_{t ^{n+1}}}    + a^{n+1}_{SUPG}(u^{n+1}_h,v_h)    -   \int_{\Omega_{t^{n+1}}} \bw^{n+1}_h \cdot \nabla u^{n+1}_h~ v_h~  dx \\ 
   &\qquad \qquad = \int_{\Omega_{t^{n+1}}} f^{n+1} v_h~   dx 
+ \sum_{K  \in \mathcal{T}_{t^{n+1}}} \delta_{K}  \int_{K} f^{n+1} ~(\mathbf{b-w}^{n+1}_h) \cdot\nabla v_h ~ dK,
\end{aligned}
\end{equation}
where
\begin{align*} 
 &a^{n+1}_{SUPG}(u_h,v_h)  = \epsilon(\nabla u_h, \nabla v_h)_{\Omega_{t ^{n+1}}} +  (\mathbf{b} \cdot \nabla u_h, v_h)_{\Omega_{t ^{n+1}}} + (c u_h, v_h)_{\Omega_{t^{n+1}}} \nonumber\\
 & \qquad +\sum_{K \in \mathcal{T}_{t^{n+1}}} \delta_{K} (- \epsilon\Delta u_h + (\mathbf{b-w}^{n+1}_h) \cdot\nabla u_h + cu_h, ~(\mathbf{b-w}^{n+1}_h ) \cdot \nabla v_h)_K.
\end{align*}

\begin{lemma}\label{stabnoncons} {Stability estimates for non-conservative ALE-SUPG form with implicit Euler method:}
Let the discrete version of~\eqref{assump1} and the assumption~\eqref{assump2} on $\delta_K$ hold true.  Further, assume that $\delta_K \leq \frac{\Delta t}{4}$ then the   solution of the problem~\eqref{disc} satisfies
\begin{align*} 
   \|u_h^{n+1}\|&^2_{L^2(\Omega_{n+1})}  + \frac{\Delta t}{2} \sum_{i=1}^{n+1}|||u_h^i|||^{2}_{L^2 \left(\Omega_{t^i} \right)}   \\ 
 \quad & \leq \left((1+\Delta t \alpha_2^0)\| u_h^{0}\|^2_{L^2(\Omega_{0})}  + \Delta t\sum_{i=1}^{n+1}\left(\frac{2}{\mu}+  \frac{\Delta t}{2}\right) \|f^{i}\|^2_{L^{2}(\Omega_{i})}\right) \\
& \qquad \qquad\qquad\qquad\qquad\qquad\qquad \exp{\left( \Delta   t \sum_{i=1}^{n+1}\frac{\alpha_1^i + \alpha_2^i}{1-\Delta t(\alpha_1^i + \alpha_2^i)} \right)},
\end{align*}
where $\alpha_1^n $ and $\alpha_2^n$ are defined as in the proof of this lemma.
\end{lemma}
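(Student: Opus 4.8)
The plan is to run the classical energy argument adapted to the moving domain: test the fully discrete equation~\eqref{disc} with $v_h=u_h^{n+1}$, extract a telescoping term from the discrete time derivative, bound the bilinear form below by the coercivity Lemma~\ref{lemma1}, estimate the remaining terms by Cauchy--Schwarz and Young's inequality, and close with a discrete Gronwall lemma. The one feature that distinguishes this from the fixed-domain SUPG analysis is that $u_h^{n}$ is a function on $\Omega_{t^n}$ while $u_h^{n+1}$ is a function on $\Omega_{t^{n+1}}$, so the pairing $\bigl(u_h^n,u_h^{n+1}\bigr)_{\Omega_{t^{n+1}}}$ has to be transported through the discrete ALE map; the Jacobian picked up in this transport, together with the mesh-velocity term that no longer cancels against the discrete time derivative (as it did in the semi-discrete Lemma), is precisely the mechanism that converts the unconditional stability of the semi-discrete problem into conditional stability.

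Concretely I would proceed as follows. First, with $v_h=u_h^{n+1}$ use $2(a,a-b)=\|a\|^2-\|b\|^2+\|a-b\|^2$ in $L^2(\Omega_{t^{n+1}})$ to rewrite the time-derivative contribution, discarding the nonnegative $\|u_h^{n+1}-u_h^n\|^2_{\Omega_{t^{n+1}}}$ term, which leaves $\tfrac{1}{2\Delta t}\bigl(\|u_h^{n+1}\|^2_{\Omega_{t^{n+1}}}-\|u_h^{n}\|^2_{\Omega_{t^{n+1}}}\bigr)$. Second, change variables by the discrete ALE map $\mathcal A_{t^n,t^{n+1}}$: since the mesh velocity is piecewise constant in time, its Jacobian is controlled explicitly, giving $\|u_h^{n}\|^2_{L^2(\Omega_{t^{n+1}})}\le(1+\Delta t\,\alpha_2^{n})\,\|u_h^{n}\|^2_{L^2(\Omega_{t^n})}$ with $\alpha_2^{n}$ governed by $\|\nabla\!\cdot\bw_h^{n+1}\|_{L^\infty}$; this defines $\alpha_2^n$ and, at $n=0$, produces the factor $(1+\Delta t\,\alpha_2^0)$ in front of $\|u_h^0\|^2$. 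Third, apply Lemma~\ref{lemma1} to get $a^{n+1}_{SUPG}(u_h^{n+1},u_h^{n+1})\ge\tfrac12|||u_h^{n+1}|||^2$. Fourth, integrate by parts in the mesh-velocity convective term (using $u_h^{n+1}=0$ on $\partial\Omega_{t^{n+1}}$), turning $-\int_{\Omega_{t^{n+1}}}\bw_h^{n+1}\!\cdot\nabla u_h^{n+1}\,u_h^{n+1}$ into $\tfrac12\int_{\Omega_{t^{n+1}}}(u_h^{n+1})^2\,\nabla\!\cdot\bw_h^{n+1}\le\alpha_1^{n+1}\|u_h^{n+1}\|^2_{\Omega_{t^{n+1}}}$, which defines $\alpha_1^{n+1}$ and yields the $\|u_h^{n+1}\|^2$ term that will force the step-size restriction. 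Fifth, bound the two forcing terms on the right: $(f^{n+1},u_h^{n+1})$ by Young against the $\mu\|u_h^{n+1}\|^2$ in $|||u_h^{n+1}|||^2$, and the SUPG source term $\sum_K\delta_K\int_K f^{n+1}(\mathbf b-\bw_h^{n+1})\!\cdot\nabla u_h^{n+1}$ by Young against the $\sum_K\delta_K\|(\mathbf b-\bw_h^{n+1})\!\cdot\nabla u_h^{n+1}\|^2_{0,K}$ part of $|||u_h^{n+1}|||^2$, the spare room for the latter coming exactly from the extra hypothesis $\delta_K\le\Delta t/4$ (which also makes the residual $\sum_K\delta_K\|f^{n+1}\|^2$ of order $\Delta t$, producing the $\tfrac{\Delta t}{2}\|f^{i}\|^2$ term in the statement).

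Collecting these and multiplying through by $2\Delta t$ puts the estimate in the recursive form $\bigl(1-\Delta t\,\beta^{n+1}\bigr)\bigl(\|u_h^{n+1}\|^2_{\Omega_{t^{n+1}}}+\tfrac{\Delta t}{2}|||u_h^{n+1}|||^2\bigr)\le(1+\Delta t\,\alpha_2^{n})\|u_h^{n}\|^2_{\Omega_{t^n}}+\Delta t\bigl(\tfrac{2}{\mu}+\tfrac{\Delta t}{2}\bigr)\|f^{n+1}\|^2$, where $\beta^{n+1}$ is a fixed multiple of $\alpha_1^{n+1}+\alpha_2^{n+1}$. Under the assumed smallness $\Delta t(\alpha_1^{i}+\alpha_2^{i})<1$ the left factor is positive, so one divides, estimates $\tfrac{1+\Delta t\,\alpha_2^{n}}{1-\Delta t\,\beta^{n+1}}\le\exp\!\bigl(\tfrac{\Delta t(\alpha_1^{n+1}+\alpha_2^{n+1})}{1-\Delta t(\alpha_1^{n+1}+\alpha_2^{n+1})}\bigr)$, iterates from $n=0$ to $n$ keeping the summed dissipation $\tfrac{\Delta t}{2}\sum_{i=1}^{n+1}|||u_h^i|||^2$ on the left, and uses $1+x\le e^x$ to collapse the product of exponentials into the single factor $\exp\!\bigl(\Delta t\sum_i\tfrac{\alpha_1^i+\alpha_2^i}{1-\Delta t(\alpha_1^i+\alpha_2^i)}\bigr)$, giving the claimed bound. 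The main obstacle is the second step — obtaining the quantitative ALE-Jacobian inequality $\|u_h^n\|^2_{L^2(\Omega_{t^{n+1}})}\le(1+\Delta t\,\alpha_2^n)\|u_h^n\|^2_{L^2(\Omega_{t^n})}$ with an explicit mesh-velocity-dependent constant — together with the (equally error-prone) bookkeeping of which domain every $L^2$-norm is evaluated on at each stage; a careless transfer there is exactly what would spuriously suggest unconditional stability. Once the recursion is in the displayed form, the discrete Gronwall iteration is routine.
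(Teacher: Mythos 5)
Your proposal follows essentially the same route as the paper's proof: testing with $u_h^{n+1}$, invoking the coercivity of $a_{SUPG}$, integrating the mesh-velocity term by parts to produce the $\nabla\cdot\bw_h^{n+1}$ contribution (the paper's $\alpha_1^{n+1}$), transporting $\|u_h^n\|^2$ from $\Omega_{t^{n+1}}$ back to $\Omega_{t^n}$ via the ALE Jacobian to get the $(1+\Delta t\,\alpha_2^n)$ factor, absorbing the source terms with Young's inequality and $\delta_K\le\Delta t/4$, and closing with a discrete Gronwall argument under the step restriction $\Delta t(\alpha_1^i+\alpha_2^i)<1$. The only cosmetic differences are that you use the polarization identity for the time-difference term where the paper uses Cauchy--Schwarz with Young (equivalent after dropping the nonnegative increment), and you phrase the domain transfer as a change of variables where the paper writes it as the Reynolds-type integral identity; the substance is identical.
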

\begin{proof}
 Substituting $v_h = u_h^{n+1}$ in the discrete form~\eqref{disc} and after applying the integration by parts to the mesh velocity integral, we get
\begin{align*}
&\left (\ds\frac{  u^{n+1}_h - u^n_h}{\Delta t},  u_h^{n+1}\right)_{\Omega_{t ^{n+1}}}    + a^{n+1}_{SUPG}(u^{n+1}_h,u_h^{n+1})    +  \frac{1}{2} \int_{\Omega_{t^{n+1}}}  \nabla\cdot\bw^{n+1}_h  |u_h^{n+1}|^2~  dx \\ 
   & \qquad\qquad= \int_{\Omega_{t^{n+1}}} f^{n+1} u_h^{n+1} dx 
+ \sum_{K  \in \mathcal{T}_{t^{n+1}}} \delta_{K}  \int_{K} f^{n+1} ~(\mathbf{b-w}^{n+1}_h) \cdot\nabla u_h^{n+1} dK.
\end{align*}
Using  the coercivity of bilinear form $a_{SUPG}$ and applying Cauchy-Schwarz inequality, we get
\begin{align*}
&||u_h^{n+1}||^2_{L^2 \left( \Omega_{t^{n+1}}\right)} + \frac{\Delta t}{2}  |||u_h^{n+1}|||^2_{L^2 \left( \Omega_{t^{n+1}} \right)}  \\
&~  \leq - \frac{1}{2} \Delta t \int_{\Omega_{t^{n+1}}} \nabla \cdot \mathbf{w}_h^{n+1}|u_h^{n+1}|^2 ~dx +\frac{1}{2}|| u_h^{n}||^2_{L^2 \left( \Omega_{t^{n+1}} \right)} + \frac{1}{2}||u_h^{n+1}||^2_{L^2 \left( \Omega_{t^{n+1}} \right)} \\
&\quad+\frac{\Delta t }{4}\sum_{K \in \mathcal{T}_{h,t^{n+1}}}\delta_{K} ||(\mathbf{b-w}_h^{n+1}) \cdot \nabla u_h^{n+1}||^2 + \Delta t \sum_{K \in \mathcal{T}_{h,t^{n+1}}}  \delta_{K} ||f^{n+1}||^2_{L^2 \left( \Omega_{t^{n+1}} \right)}\\
&\quad +  \frac{\Delta t}{ \mu}||f^{n+1}||^2_{L^{2}(\Omega_{t^{n+1}})} + \Delta t \frac{\mu}{4} || u_h^{n+1} ||^2_{L^2 \left( \Omega_{t^{n+1}} \right)}.
\end{align*}
Since we have
\begin{align*}
 || u_h^{n}||^2_{L^2 \left( \Omega_{t^{n+1}} \right)} = || u_h^{n}||^2_{L^2( \Omega_{t^{n}})} + \int_{t^{n}}^{t^{n+1}}\int_{\Omega_{t}} \nabla \cdot \mathbf{w}_{h}|u_h^{n}|^2~dx~ dt,
\end{align*}
 we get
\begin{align*}
||u_h^{n+1}||&^2_{L^2 \left( \Omega_{t^{n+1}} \right)} + \frac{1}{2} \Delta t |||u_h^{n+1}|||^2_{L^2 \left( \Omega_{t^{n+1}} \right)} \\
& \leq \int_{t^{n}}^{t^{n+1}}\int_{\Omega_{t}}\nabla \cdot \mathbf{w}_{h}|u_h^{n}|^2~dx~ dt -\Delta t \int_{\Omega_{t^{n+1}}} \nabla \cdot \mathbf{w}_h^{n+1}|u_h^{n+1}|^2 ~dx  \\
& \quad+ || u_h^{n}||^2_{L^2( \Omega_{t^{n}})} +  \Delta t \frac{2}{ \mu}||f^{n+1}||^2_{L^{2}(\Omega_{t^{n+1}})}+ 2\Delta t \sum_{K \in \mathcal{T}_{h,t^{n+1}}}  \delta_{K} ||f^{n+1}||^2_{L^2 \left( \Omega_{t^{n+1}} \right)}.
\end{align*}
Let   
\[
 \mathcal A_{{t_{n}},t_{n+1}} =\mathcal A_{h, t_{n+1}} \circ~ \mathcal A_{t_{n}}^{-1}
\]
 be the ALE mapping between   $\Omega_{t^{n}}$ and $\Omega_{t^{n+1}}$,  and $J_{\mathcal A_{{t_{n}},t_{n+1}}}$ be its Jacobian, then we have
\begin{align*}
&||u_h^{n+1}||^2_{L^2 \left( \Omega_{t^{n+1}} \right)}  + \frac{1}{2} \Delta t |||u_h^{n+1}|||^2_{L^2 \left( \Omega_{t^{n+1}} \right)}\\
& \quad \leq \Delta t ||\nabla \cdot \mathbf{w}_{h}(t^{n+1})||_{L_{\infty} (\Omega_{t^{n+1}})} 
||u_h^{n+1}||^2_{L^2 \left( \Omega_{t^{n+1}} \right)}+  \Delta t \frac{2}{ \mu}||f^{n+1}||^2_{L^{2}(\Omega_{t^{n+1}})}\\
& \qquad+\left(1+ \Delta t \sup_{t \in (t^{n},t^{n+1})}~ ||J_{\mathcal A_{{t_{n}},t_{n+1}}}\nabla \cdot \mathbf{w}_{h}||_{L_{\infty} (\Omega_{t})}\right)||u_h^{n}||^2_{L^2( \Omega_{t^{n}})}\\
&\qquad + \Delta t \sum_{K \in \mathcal{T}_{h,t^{n+1}}}  \delta_{K} ||f^{n+1}||^2_{L^2 \left( \Omega_{t^{n+1}} \right)}.
\end{align*}
Further, using the notations 
\[
 \alpha_1^n   = ||\nabla \cdot \mathbf{w}_{h}(t^{n})||_{L_{\infty}(\Omega_{t^{n}})}, \qquad
\alpha_2^n = \sup_{t \in (t^{n},t^{n+1})}~ ||J_{\mathcal A_{{t_{n}},t_{n+1}}}\nabla \cdot \mathbf{w}_{h}||_{L_{\infty}(\Omega_{t})},
\]
the above equation can be written as 
\begin{align*}
||u_h^{n+1}||^2_{L^2 \left( \Omega_{t^{n+1}} \right)}& + \frac{1}{2} \Delta t |||u_h^{n+1}|||^2_{L^2 \left( \Omega_{t^{n+1}} \right)} \\
 &  \leq \Delta t \alpha_1^{n+1}||u_h^{n+1}||^2_{L^2 \left( \Omega_{t^{n+1}} \right)}+ \left(1+ \Delta t \alpha_2^n\right)||u_h^{n}||^2_{L^2( \Omega_{t^{n}})}  \\
 & \qquad +  \Delta t \frac{2}{ \mu}||f^{n+1}||^2_{L^{2}(\Omega_{t^{n+1}})}
 + 2\Delta t \sum_{K \in \mathcal{T}_{h,t^{n+1}}}  \delta_{K} ||f^{n+1}||^2_{L^2 \left( \Omega_{t^{n+1}} \right)}.
\end{align*}
Summing over the index $i = 0,1,2, \cdots ,n$, and using the assumptions on $\delta_K$, we have 
\begin{align*}
||&u_h^{n+1}||^2_{L^2 \left( \Omega_{t^{n+1}} \right)} + \frac{1}{2} \Delta t\sum_{i = 1}^{n+1} |||u_h^{i}|||^2_{L^2 \left( \Omega_{t^{i}} \right)} \\
 & \qquad \qquad \leq \Delta t \alpha_1^{n+1}||u_h^{n+1}||^2_{L^2 \left( \Omega_{t^{n+1}} \right)}+\Delta t \sum_{i=1}^{n}(\alpha_1^{i}+   \alpha_2^i)||u_h^{i}||^2_{L^2( \Omega_{t^{i}})}\\
&\qquad \qquad(1+  \Delta t \alpha_2^0)||u_h^{0}||^2_{L^2( \Omega_{t^{0}})}+ 2\Delta t \sum_{i=1}^{n+1}\sum_{K \in \mathcal{T}_{h,t^{i}}}  \delta_{K} ||f^{n}||^2_{L^2 \left( \Omega_{t^{i}} \right)}\\
& \qquad \qquad +   \Delta t \sum_{i=1}^{n+1}\frac{2}{ \mu}||f^{i}||^2_{L^{2}(\Omega_{t^{i}})}\\
& \qquad \qquad \leq \Delta t \sum_{i = 1}^{n+1}(\alpha_1^{i}+  \alpha_2^i)||u_h^{i}||^2_{L^2( \Omega_{t^{i}})} + (1+ \Delta t \alpha_2^0)||u_h^0||^2_{L^2(\Omega_{t^{0}})}\\
& \qquad \qquad + \sum_{i=1}^{n+1}  \left(\frac{2 \Delta t}{ \mu} + \frac{\Delta t ^2}{2}\right)||f^{i}||^2_{L^{2}(\Omega_{t^{i}})}.
\end{align*}
We now apply the Gronwall's lemma to get
\begin{align*} 
  \|u_h^{n+1}\|^2_{L^2(\Omega_{n+1})} & + \frac{\Delta t}{2} \sum_{i=1}^{n+1}|||u_h^i|||^{2}_{L^2 \left(\Omega_{t^i} \right)}   \\ 
 & \leq \left[(1+\Delta t \alpha_2^0)\| u_h^{0}\|^2_{L^2(\Omega_{0})}  + \Delta t\sum_{i=1}^{n+1}\left(\frac{2}{\mu}+  \frac{\Delta t}{2}\right) \|f^{i}\|^2_{L^{2}(\Omega_{i})}\right]\\ 
&   \exp{\left( \Delta   t \sum_{i=1}^{n+1}\frac{\alpha_1^i + \alpha_2^i}{1-\Delta t(\alpha_1^i + \alpha_2^i)} \right)}.
\end{align*}
 The above stability estimate is stable provided
\[
 \Delta t < \frac{1}{\alpha^n_1 + \alpha_2^n} = \left(\|\nabla \cdot \bw_h(t^n) \|_{L^\infty(\Omega_{(t^n)})} + \sup_{t\in(t^{n},t^{n+1})  } \| J_{A_{t^{n},t^{n+1}} }\nabla \cdot  \bw_h \|_{L^\infty(\Omega_t)}\right)^{-1}.
\]
\end{proof}

\begin{remark}
The stability estimates for the consistent ALE SUPG, with the transient term in stabilization can also be proved. Since both the transient terms will be on $\Omega_{n+1}$ domain, it can be handled easily. It can be shown that the stability estimate differs only by a constant multiplication of right hand side terms. Here, the proof is not given for the sake of brevity.
\end{remark}

\subsection{Discrete ALE-SUPG with Crank-Nikolson time discretization} We now consider the modified Crank-Nikolson method which is basically Runge-Kutta method of order two. For an equation
\begin{align}\label{standardeqn}
 \frac{du(t)}{dt} = f(u(t),t),~~~ t>0~~ and ~~u(0) = u_0
\end{align}
 with the modified Crank-Nikolson method, we get
\begin{align*}
 u^{n+1} - u^{n} = \Delta t f \left(\frac{u^{n+1} + u^{n}}{2}, t^{n+\frac{1}{2}} \right)
\end{align*}

\begin{lemma}Stability estimates for the non-conservative ALE-SUPG form applying Crank-Nicolson method:\label{stabnoncons_CN}
 Let the discrete version of~\eqref{assump1} and the assumption~\eqref{assump2} on $\delta_K$ hold true.  Further, assume that $\delta_K \leq \frac{\Delta t}{4}$ then the solution obtained from the Crank-Nicolson time discretization satisfies
\begin{equation*}
\begin{aligned}
  \|u_h^{n+1}\|^2_{L^2(\Omega_{n+1})}  +& \frac{\Delta t}{4} \sum_{i=0}^{n}|||(u_h^{i+1} + u_h^i)|||^{2}_{L^2 \left(\Omega_{t^{i+1/2}} \right)} \\
&\leq  \left((1+\Delta t \beta_2^0)\| u_h^{0}\|^2_{L^2(\Omega_{0})}  + \Delta t\sum_{i=0}^{n}\left(\frac{2}{\mu}+  \Delta t \right) \|f^{i+1/2}\|^2_{L^{2}(\Omega_{i+1/2})}\right)\\
& \exp{\left( \Delta   t \sum_{i=0}^{n}\frac{\beta_1^i + \beta_2^i}{1-\Delta t(\beta_1^i + \beta_2^i)} \right)}. \\
 \end{aligned}
\end{equation*}
\end{lemma}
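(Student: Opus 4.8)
The plan is to replay the implicit--Euler argument of Lemma~\ref{stabnoncons}, the only structural novelty being that the modified Crank--Nicolson scheme is posed on the intermediate domain $\Omega_{t^{n+1/2}}$ and is tested against the natural midpoint function. Concretely, the fully discrete scheme is: given $u_h^n\in V_h$, find $u_h^{n+1}\in V_h$ with
\[
\left(\frac{u_h^{n+1}-u_h^n}{\Delta t},\,v_h\right)_{\Omega_{t^{n+1/2}}}
+\,a^{n+1/2}_{SUPG}\!\left(\tfrac12(u_h^{n+1}+u_h^n),\,v_h\right)
-\int_{\Omega_{t^{n+1/2}}}\bw_h^{n+1/2}\cdot\nabla\tfrac12(u_h^{n+1}+u_h^n)\,v_h\,dx
= \mathcal{R}^{n+1/2}(v_h)
\]
for all $v_h\in V_h$, where $\mathcal{R}^{n+1/2}$ gathers the two source terms evaluated at $t^{n+1/2}$. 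First I would take $v_h=z_h:=\tfrac12(u_h^{n+1}+u_h^n)$. Then Lemma~\ref{lemma1} gives $a^{n+1/2}_{SUPG}(z_h,z_h)\ge\tfrac12|||z_h|||^2_{L^2(\Omega_{t^{n+1/2}})}$; integration by parts converts the mesh-velocity term into $\tfrac12\int_{\Omega_{t^{n+1/2}}}(\nabla\cdot\bw_h^{n+1/2})|z_h|^2\,dx$; and the transient term equals $\tfrac{1}{2\Delta t}\bigl(\|u_h^{n+1}\|^2_{L^2(\Omega_{t^{n+1/2}})}-\|u_h^n\|^2_{L^2(\Omega_{t^{n+1/2}})}\bigr)$. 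On the right-hand side I would use Young's inequality exactly as in Lemma~\ref{stabnoncons}: the $\mu$-weighted square term it generates is absorbed by the reaction part of $\tfrac12|||z_h|||^2$, and the $\delta_K$-weighted square term by the stabilization part, the latter absorption being where the hypothesis $\delta_K\le\Delta t/4$ is used; what survives is a source bound of the form $\tfrac1\mu\|f^{n+1/2}\|^2_{0}+\sum_K\delta_K\|f^{n+1/2}\|^2_{0,K}$. Multiplying the resulting inequality by $2\Delta t$ turns the transient term into $\|u_h^{n+1}\|^2_{L^2(\Omega_{t^{n+1/2}})}-\|u_h^n\|^2_{L^2(\Omega_{t^{n+1/2}})}$ and the coercive lower bound into $\tfrac{\Delta t}{4}|||u_h^{n+1}+u_h^n|||^2_{L^2(\Omega_{t^{n+1/2}})}$, matching the stated left-hand side.

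The genuinely new bookkeeping concerns the three time levels $t^n$, $t^{n+1/2}$, $t^{n+1}$. At this point the two squared norms $\|u_h^{n+1}\|^2$ and $\|u_h^n\|^2$ are both measured over $\Omega_{t^{n+1/2}}$; I would transport each to its ``native'' domain through the ALE submaps $\mathcal{A}_{t^{n+1/2},t^{n+1}}$ and $\mathcal{A}_{t^n,t^{n+1/2}}$ by the change-of-variables formula, precisely as the quantity $\|u_h^n\|^2_{L^2(\Omega_{t^{n+1}})}$ was re-expressed on $\Omega_{t^n}$ in the proof of Lemma~\ref{stabnoncons}. This yields $\|u_h^{n+1}\|^2_{L^2(\Omega_{t^{n+1}})}-\|u_h^n\|^2_{L^2(\Omega_{t^n})}$ plus remainders bounded by $\Delta t$ times an $L^\infty$-norm of a Jacobian multiple of $\nabla\cdot\bw_h$ over the relevant subinterval; the divergence term $\tfrac12\int(\nabla\cdot\bw_h^{n+1/2})|z_h|^2$ is treated the same way after $|z_h|^2\le\tfrac12(|u_h^{n+1}|^2+|u_h^n|^2)$. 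Collecting these contributions, I would define $\beta_1^i$ and $\beta_2^i$ as the Crank--Nicolson counterparts of $\alpha_1^i,\alpha_2^i$, namely $\beta_1^i=\|\nabla\cdot\bw_h(t^{i+1/2})\|_{L^\infty(\Omega_{t^{i+1/2}})}$ and $\beta_2^i=\sup_{t\in(t^i,t^{i+1})}\|J_{\mathcal{A}_{t^i,t}}\nabla\cdot\bw_h\|_{L^\infty(\Omega_t)}$ (up to the choice of which ALE submaps one routes the transfers through), so that the one-step estimate takes the form
\[
\|u_h^{n+1}\|^2_{L^2(\Omega_{t^{n+1}})}+\tfrac{\Delta t}{4}|||u_h^{n+1}+u_h^n|||^2_{L^2(\Omega_{t^{n+1/2}})}
\le \Delta t\,\beta_1^{n+1}\|u_h^{n+1}\|^2_{L^2(\Omega_{t^{n+1}})}+(1+\Delta t\,\beta_2^n)\|u_h^n\|^2_{L^2(\Omega_{t^n})}+\Delta t\bigl(\tfrac2\mu+\Delta t\bigr)\|f^{n+1/2}\|^2_{L^2(\Omega_{t^{n+1/2}})}.
\]

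Finally I would sum this inequality over $i=0,1,\dots,n$. The norm differences telescope, the term $\Delta t\,\beta_1^{n+1}\|u_h^{n+1}\|^2$ is moved to the left or folded into the running sum, and the discrete Gronwall lemma is applied exactly as at the end of the proof of Lemma~\ref{stabnoncons}, with per-step weight $(\beta_1^i+\beta_2^i)/\bigl(1-\Delta t(\beta_1^i+\beta_2^i)\bigr)$; this produces the claimed bound under the step-size restriction $\Delta t<(\beta_1^i+\beta_2^i)^{-1}$. I expect the main obstacle to be exactly the multi-domain accounting of the second paragraph: one must split the squared norm taken over the midpoint domain $\Omega_{t^{n+1/2}}$ between $\Omega_{t^n}$ and $\Omega_{t^{n+1}}$ in a way that both preserves the telescoping and leaves all Jacobian remainders in the Gronwall-admissible shape $\Delta t(\beta_1^i+\beta_2^i)\|u_h^i\|^2$; once that split is organized, every remaining step is a verbatim repetition of the implicit-Euler argument.
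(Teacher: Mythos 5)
Your proposal is correct and follows essentially the same route as the paper: test with the midpoint combination, use the difference-of-squares/polarization identity for the transient term, invoke coercivity and Young's inequality with the absorption enabled by $\delta_K\le\Delta t/4$, transport the $L^2$ norms to their native domains via the ALE maps picking up Jacobian-weighted $\nabla\cdot\bw_h$ remainders, and close with the discrete Gronwall lemma. The only (cosmetic) divergence is in the bookkeeping of domains: the paper poses the transient term on $\Omega_{t^{n+1}}$ and transports only $\|u_h^n\|^2$ back to $\Omega_{t^n}$ through a full-step map, whereas you pose it on $\Omega_{t^{n+1/2}}$ and transport both norms through half-step maps, which changes only the precise ALE submaps entering the definitions of $\beta_1^i,\beta_2^i$ --- a freedom you explicitly acknowledge.
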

\begin{proof}
Applying the modified Crank-Nicolson time discretization to the semi-discrete equation $(5)$, we get, 
\begin{equation*}
\begin{aligned}
 &\left (\ds\frac{  u^{n+1}_h - u^n_h}{\Delta t},  v_h\right)_{\Omega_{t^{n+1}}} + a^{n+1/2}_{SUPG}\left(\frac{u^{n+1}_h + u^{n}_h}{2},v_h \right) \\
&\quad- \int_{\Omega_{t^{n+1/2}}} \bw^{n+1/2}_h \cdot \nabla \left(\frac{u^{n+1}_h + u^{n}_h}{2} \right)~ v_h~  dx     \\ 
&\quad  = \int_{\Omega_{t^{n+1/2}}} f^{n+1/2} v_h~   dx + \sum_{K  \in \mathcal{T}_{h,t^{n+1/2}}} \delta_{K}  \int_{K} f^{n+1/2} ~(\mathbf{b-w}_h) \cdot\nabla v_h ~ dK.
\end{aligned}
\end{equation*}
 Testing the above equation with $v_h = (u_h^{n+1} + u_h^n)$, and using the equalities
\begin{equation}
(u_h, u_h + v_h) = \frac{1}{2}||u_h||^2 + \frac{1}{2}||u_h + v_h||^2 - \frac{1}{2}||v_h||^2.  
\end{equation}
Using the relation,
\begin{align*}
 || u_h^{n}||^2_{L^2 \left( \Omega_{t^{n+1}} \right)} = || u_h^{n}||^2_{L^2( \Omega_{t^{n}})} + \int_{t^{n}}^{t^{n+1}}\int_{\Omega_{t}} \nabla \cdot \mathbf{w}_{h}|u_h^{n}|^2~dx~ dt,
\end{align*}
 the first term can be written as,
 \begin{equation*}
\begin{aligned}
 \int_{\Omega_{t^{n+1}}}u_h^{n+1} &(u_h^{n+1} + u_h^n)~dx  - \int_{\Omega_{t^{n+1}}}u_h^{n}(u_h^{n+1} + u_h^n)~dx \\
&= \frac{1}{2} ||u_h^{n+1}||^2_{L_2(\Omega_{t^{n+1}})} + \frac{1}{2}||u_h^{n+1} + u_h^{n}||^2_{L_2(\Omega_{t^{n+1}})}\\& - \frac{1}{2}||u_h^n||^2_{L_2(\Omega_{t^{n+1}})} - \frac{1}{2}||u_h^n||^2_{L_2(\Omega_{t^{n+1}})} \\
&- \frac{1}{2}||u_h^{n+1} + u_h^{n}||^2_{L_2(\Omega_{t^{n+1}})} +  \frac{1}{2}||u_h^{n+1}||^2_{L_2(\Omega_{t^{n+1}})}\\
& = ||u_h^{n+1}||^2_{L_2(\Omega_{t^{n+1}})} - ||u_h^n||^2_{L_2(\Omega_{t^{n+1}})}\\
& = ||u_h^{n+1}||^2_{L_2(\Omega_{t^{n+1}})} - ||u_h^n||^2_{L_2(\Omega_{t^{n}})} - \Delta t\int_{\Omega_{t^{n+1/2}}} \nabla \cdot \mathbf{w}_{h}|u_h^{n}|^2~dx.
\end{aligned}
\end{equation*}
Using this relation, the coercivity of the bilinear form and the Cauchy Schwarz inequality for the right hand side terms, the above equation takes the form,
\begin{equation*}
\begin{aligned}
||u_h^{n+1}||&^2_{L_2(\Omega_{t^{n+1}})} - ||u_h^n||^2_{L_2(\Omega_{t^{n}})} - \Delta t\int_{\Omega_{t^{n+1/2}}} \nabla \cdot \mathbf{w}_{h}|u_h^{n}|^2~dx + \frac{\Delta t}{4} |||(u_h^{n+1} + u_h^n)|||^2_{{L_2(\Omega_{t^{n+1/2}})}} \\
&\leq \frac{\Delta t}{2} \int_{\Omega_{t^{n+1/2}}} \left(\bw_h \cdot \nabla (u^{n+1}_h + u^{n}_h )\right)~ (u^{n+1}_h + u^{n}_h )~  dx 
 + \frac{\Delta t}{\mu} ||f^{n+1/2}||^2_{{L_2(\Omega_{t^{n+1/2}})}} \\&+ \frac{\mu \Delta t}{8}||u_h^{n+1}+u_h^{n}||^2_{{L_2(\Omega_{t^{n+1/2}})}} +\Delta t \sum_{K \in \mathcal{T}_{t^{n+1/2}}}\delta_{K} ||f^{n+1/2}||^2_{K}\\
&
+ \frac{\Delta t }{8} \sum_{K \in \mathcal{T}_{t^{n+1/2}}}\delta_{K} ||(\mathbf{b-w}_h) \cdot \nabla (u_h^{n+1}+ u_h^n)||^2_{K}.
\end{aligned}
\end{equation*}
Absorbing the right hand side terms into the SUPG norm, and using integration by parts for the mesh velocity term, we get,

\begin{equation*}
\begin{aligned}
||u_h^{n+1}&||^2_{L_2(\Omega_{t^{n+1}})} + \frac{\Delta t}{8} |||(u_h^{n+1} + u_h^n)|||^2_{{L_2(\Omega_{t^{n+1/2}})}}\\  
&\leq  \Delta t\int_{\Omega_{h,t^{n+1/2}}} \nabla \cdot \mathbf{w}_{h}|u_h^{n}|^2~dx - \frac{\Delta t}{4}\int_{\Omega_{h, t^{n+1/2}}}\nabla \cdot \mathbf{w}_{h} |u_h^{n+1}+u_h^{n}|^2~dx\\
&   + ||u_h^n||^2_{L_2(\Omega_{t^{n}})} +\frac{\Delta t}{\mu} ||f^{n+1/2}||^2_{{L_2(\Omega_{t^{n+1/2}})}} +\Delta t \sum_{K \in \mathcal{T}_{h,t^{n+1/2}}} \delta_{K} ||f^{n+1/2}||^2_{K}\\
&\leq  \Delta t\int_{\Omega_{h,t^{n+1/2}}} \nabla \cdot \mathbf{w}_{h}\left(|u_h^{n}|^2- \frac{1}{4}|u_h^{n+1}+u_h^{n}|^2 \right)~dx \\
&  +  ||u_h^n||^2_{L_2(\Omega_{t^{n}})} +\frac{\Delta t}{\mu} ||f^{n+1/2}||^2_{{L_2(\Omega_{t^{n+1/2}})}} +\Delta t \sum_{K \in \mathcal{T}_{h,t^{n+1/2}}} \delta_{K} ||f^{n+1/2}||^2_{K}\\
&\leq  \Delta t\int_{\Omega_{h,t^{n+1/2}}} \nabla \cdot \mathbf{w}_{h}\left(|u_h^{n}|^2 + |u_h^{n+1}|^2 \right)~dx +  ||u_h^n||^2_{L_2(\Omega_{t^{n}})}\\
&   +\frac{\Delta t}{\mu} ||f^{n+1/2}||^2_{{L_2(\Omega_{t^{n+1/2}})}} +\Delta t \sum_{K \in \mathcal{T}_{h,t^{n+1/2}}} \delta_{K} ||f^{n+1/2}||^2_{K}.
\end{aligned}
\end{equation*}
Using the ALE map and its Jacobian, the equation becomes
\begin{equation*}
\begin{aligned}
||u_h^{n+1}||^2_{L_2(\Omega_{t^{n+1}})} +& \frac{\Delta t}{4} |||(u_h^{n+1} + u_h^n)|||^2_{{L_2(\Omega_{t^{n+1/2}})}}  \\
 &\leq   \Delta t ||\nabla \cdot \mathbf{w}_{h}||_{L_{\infty}(\Omega_{t^{n+1/2}})}||u_h^{n+1}||^2_{L_2(\Omega_{t^{n+1/2}})}\\
& +\Delta t ||\nabla \cdot \mathbf{w}_{h}||_{L_{\infty}(\Omega_{t^{n+1/2}})} ||u_h^n||^2_{L_2(\Omega_{t^{n+1/2}})} +\frac{\Delta t}{\mu} ||f^{n+1/2}||^2_{{L_2(\Omega_{t^{n+1/2}})}}\\
 &  +\Delta t \sum_{K \in \mathcal{T}_{t^{n+1/2}}} \delta_{K} ||f^{n+1/2}||^2_{K} .
\end{aligned}
\end{equation*}
Further, with the notations
\begin{align*}
 &\beta_1^{n+1}  =  \Big|\Big|J_{\mathcal A_{{t_{n+1}},~t_{n+1/2}}}\Big|\Big|_{L_{\infty}(\Omega_{t^{n+1}})}||\nabla \cdot \mathbf{w}_{h}||_{L_{\infty}(\Omega_{t^{n+1/2}})},\\ 
&\beta_2^n =  \Big|\Big| J_{\mathcal A_{{t_{n}},~t_{n+1/2}}} \Big| \Big|_{L_{\infty}(\Omega_{t^{n}})} 
  || \nabla \cdot \mathbf{w}_{h}||_{L_{\infty}(\Omega_{t^n})},
\end{align*}

the inequality becomes
\begin{equation*}
\begin{aligned}
||u_h^{n+1}&||^2_{L_2(\Omega_{t^{n+1}})} + \frac{\Delta t}{4} |||(u_h^{n+1} + u_h^n)|||^2_{{L_2(\Omega_{t^{n+1/2}})}} \\ 
&\leq  \Delta t \beta_1^{n+1}||u_h^{n+1}||^2_{{L_2(\Omega_{t^{n+1}})}} + ( 1 + \Delta t \beta_2^n)||u_h^n||^2_{L_2(\Omega_{t^{n}})}\\
&  +\frac{\Delta t}{\mu} ||f^{n+1/2}||^2_{{L_2(\Omega_{t^{n+1/2}})}} +2\Delta t \sum_{K \in \mathcal{T}_{t^{n+1/2}}} \delta_{K} ||f^{n+1/2}||^2_{K}.
\end{aligned}
\end{equation*}
Summing over the index $i = 0, 1,2,...n$, and using the assumption on $\delta_k$ , we have
\begin{equation*}
\begin{aligned}
||u_h^{n+1}&||^2_{L_2(\Omega_{t^{n+1}})} + \frac{\Delta t}{4} \sum_{i=0}^{n} |||(u_h^{i+1} + u_h^i)|||^2_{{L_2(\Omega_{t^{i+1/2}})}}  \\
&\leq  \Delta t \beta_1^{n+1}||u_h^{n+1}||^2_{{L_2(\Omega_{t^{n+1}})}} + \Delta t \sum_{i=1}^{n} (\beta_1^i + \beta_2^i)||u_h^i||^2_{{L_2(\Omega_{t^{i}})}} \\
 &  + ( 1 + \Delta t \beta_2^0)||u_h^0||^2_{L_2(\Omega_{t^{0}})}+\sum_{i=0}^{n}\left(\frac{\Delta t}{\mu} ||f^{i+1/2}||^2_{{L_2(\Omega_{t^{i+1/2}})}} +\frac{\Delta t^2}{2} ||f^{i+1/2}||^2_{K}\right)\\
 & \leq \Delta t \sum_{i=1}^{n+1} (\beta_1^i + \beta_2^i)||u_h^i||^2_{{L_2(\Omega_{t^{i}})}} + ( 1 + \Delta t \beta_2^0)||u_h^0||^2_{L_2(\Omega_{t^{0}})}\\
&~ +\Delta t\sum_{i=0}^{n}\left(\frac{2}{\mu} + \frac{\Delta t}{2} \right) \|f^{i+1/2}\|^2_{L^{2}(\Omega_{i+1/2})}.
\end{aligned}
\end{equation*}
Finally, using the Grownwall's lemma, we get
\begin{equation*}
\begin{aligned}
  \|u_h^{n+1}\|&^2_{L^2(\Omega_{n+1})}  + \frac{\Delta t}{4} \sum_{i=0}^{n}|||(u_h^{i+1} + u_h^i)|||^{2}_{L^2 \left(\Omega_{t^{i+1/2}} \right)} \\
&\leq  \left((1+\Delta t \beta_2^0)\| u_h^{0}\|^2_{L^2(\Omega_{0})}  + \Delta t\sum_{i=0}^{n}\left(\frac{2}{\mu}+  \Delta t \right) \|f^{i+1/2}\|^2_{L^{2}(\Omega_{i+1/2})}\right)\\
 &\exp{\left( \Delta   t \sum_{i=1}^{n+1}\frac{\beta_1^i + \beta_2^i}{1-\Delta t(\beta_1^i + \beta_2^i)} \right)}. 
 \end{aligned}
\end{equation*}
The estimate is stable provided,
\begin{align*}
 \Delta t < \frac{1}{\beta^n_1 +\beta_2^n} = \Big( &\Big|\Big|J_{\mathcal A_{{t_{n}},~t_{n-1/2}}}\Big|\Big|_{L_{\infty}(\Omega_{t_n})} ||\nabla \cdot \mathbf{w}_{h}||_{L_{\infty}(\Omega_{t_{n-1/2}})} \\
&+  \Big|\Big|J_{\mathcal A_{{t_{n}},~t_{n+1/2}}}\Big|\Big|_{L_{\infty}(\Omega_{t_n})}||\nabla \cdot \mathbf{w}_{h}||_{L_{\infty}(\Omega_{t_{n+1/2}})}\Big)^{-1}
\end{align*}

\end{proof}
\begin{remark}
 In this case, for consistent ALE-SUPG, transient term in stabilization is on $\Omega_{n+1/2}$ domain while the standard transient term is in $\Omega_{n+1}$, so the stability estimates for consistent ALE-SUPG needs further investigation.
\end{remark}

\subsection{Discrete ALE-SUPG with backward-difference (BDF-2) time discretization}We now consider the backward difference method of order two for temporal discretization.
For the equation~\eqref{standardeqn}, the backward-difference method gives,
\begin{align*}
\frac{3}{2} u^{n+1} - 2u^{n} + \frac{1}{2} u^{n-1} = \Delta t~ f(u^{n+1}, t^{n+1})
\end{align*}
\begin{lemma}
 Stability estimates for non-conservative ALE-SUPG form applying backward-difference
formula: Let the discrete version of~\eqref{assump1} and the assumption~\eqref{assump2} on $\delta_K$ hold true.  Further, assume that $\delta_K \leq \frac{\Delta t}{4}$ then the solution satisfies 
\begin{equation*}
\begin{aligned}
||u&^{n+1}_h||^2_{L^{2}(\Omega_{t^{n+1}})} + ||2u^{n+1}_h - u^{n}_h||^2_{L^{2}(\Omega_{t^{n}})} +  \Delta t \sum_{i=1}^{n+1} |||u^{i}_h|||^2_{L^{2}(\Omega_{t^{i}})}\\
& \left((1+\Delta t \alpha_2^0)\| u_h^{0}\|^2_{L^2(\Omega_{t^0})} + ||2u_h^1 - u_h^0||^2_{L^2(\Omega_{t^1})} + \Delta t\sum_{i=1}^{n+1}\left(\frac{2}{\mu}+  \frac{\Delta t}{2}\right) \|f^{i}\|^2_{L^{2}(\Omega_{t^i})}\right)\\ 
&~~~\exp{\left( \Delta t \sum_{i=1}^{n+1}\frac{2\alpha_1^i + \alpha_2^i}{1-\Delta t(2\alpha_1^i + \alpha_2^i)} \right)}.
\end{aligned}
\end{equation*}
\end{lemma}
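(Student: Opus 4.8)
The plan is to reuse, essentially verbatim, the energy argument of Lemma~\ref{stabnoncons}, replacing the implicit Euler difference quotient by the BDF-2 one and the square-completion step by the classical $G$-stability identity for two-step schemes. Starting from~\eqref{standardeqn} and~\eqref{semidisc}, the fully discrete ALE-SUPG/BDF-2 scheme reads: for all $v_h\in V_h$,
\begin{equation*}
\begin{aligned}
&\left(\ds\frac{3u_h^{n+1}-4u_h^{n}+u_h^{n-1}}{2\Delta t},\,v_h\right)_{\Omega_{t^{n+1}}}+a_{SUPG}^{n+1}(u_h^{n+1},v_h)-\int_{\Omega_{t^{n+1}}}\bw_h^{n+1}\cdot\nabla u_h^{n+1}\,v_h\,dx\\
&\qquad=\int_{\Omega_{t^{n+1}}}f^{n+1}v_h\,dx+\sum_{K\in\mathcal{T}_{t^{n+1}}}\delta_K\int_K f^{n+1}\,(\mathbf{b-w}_h^{n+1})\cdot\nabla v_h\,dK .
\end{aligned}
\end{equation*}
I would test with $v_h=u_h^{n+1}$, integrate by parts in the mesh-velocity term (which produces $\tfrac12\int_{\Omega_{t^{n+1}}}\nabla\cdot\bw_h^{n+1}|u_h^{n+1}|^2\,dx$, because $u_h^{n+1}$ vanishes on $\partial\Omega_{t^{n+1}}$), multiply by $2\Delta t$, and rewrite the discrete time-derivative contribution by means of the identity
\begin{equation*}
2\bigl(3a-4b+c,\,a\bigr)=||a||^2+||2a-b||^2+||a-2b+c||^2-||b||^2-||2b-c||^2
\end{equation*}
with $a=u_h^{n+1}$, $b=u_h^{n}$, $c=u_h^{n-1}$ (all inner products taken on $\Omega_{t^{n+1}}$); the nonnegative term $||u_h^{n+1}-2u_h^{n}+u_h^{n-1}||^2$ is then simply discarded.

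Next I would apply the coercivity bound of Lemma~\ref{lemma1} to $a_{SUPG}^{n+1}$ to extract $|||u_h^{n+1}|||^2$, and estimate the two forcing terms on the right by the Cauchy--Schwarz and Young inequalities exactly as in Lemma~\ref{stabnoncons}: the $L^2$ part yields $\tfrac{2\Delta t}{\mu}||f^{n+1}||_0^2+\tfrac{\mu\Delta t}{2}||u_h^{n+1}||_0^2$, and the stabilization part yields $2\Delta t\sum_K\delta_K||f^{n+1}||_{0,K}^2$ together with a multiple of $\sum_K\delta_K||(\mathbf{b-w}_h^{n+1})\cdot\nabla u_h^{n+1}||_{0,K}^2$ that is absorbed into $|||u_h^{n+1}|||^2$ via $\delta_K\le\Delta t/4$ and~\eqref{assump2}. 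The mesh-velocity term, moved to the right, contributes $2\Delta t\,||\nabla\cdot\bw_h^{n+1}||_{L^\infty(\Omega_{t^{n+1}})}||u_h^{n+1}||^2_{L^2(\Omega_{t^{n+1}})}$; the factor $2$ here (versus $1$ for implicit Euler) stems from the normalisation by $2\Delta t$ needed to read off $||u_h^{n+1}||^2$ with unit coefficient from the two-step identity, and it is precisely what promotes $\alpha_1^i$ to $2\alpha_1^i$ in the final Gronwall exponent. The level-$n$ quadratic quantities $||u_h^{n}||^2$ and $||2u_h^{n}-u_h^{n-1}||^2$, measured at this stage over $\Omega_{t^{n+1}}$, I would transport back to $\Omega_{t^{n}}$ through the change-of-variables relation
\[
||v||^2_{L^2(\Omega_{t^{n+1}})}=||v||^2_{L^2(\Omega_{t^{n}})}+\int_{t^{n}}^{t^{n+1}}\int_{\Omega_{t}}\nabla\cdot\bw_h\,|v|^2\,dx\,dt
\]
already used in the preceding proofs, applied with $v=u_h^{n}$ and with $v=2u_h^{n}-u_h^{n-1}$ (these functions understood through the ALE maps $\mathcal A_{t^{n},t^{n+1}}$ and $\mathcal A_{t^{n-1},t^{n}}$), and bound the emerging volume integrals by $L^\infty$-norms of $\nabla\cdot\bw_h$ and of the Jacobians $J_{\mathcal A_{t^{n},t^{n+1}}}$; this produces exactly the quantities $\alpha_1^{n}$ and $\alpha_2^{n}$ of Lemma~\ref{stabnoncons}.

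Finally I would sum the resulting one-step inequality over $n$: the couple $||u_h^{n}||^2+||2u_h^{n}-u_h^{n-1}||^2$ telescopes, the summation starting at $n=1$ since BDF-2 needs a one-step starter for $u_h^1$ --- whence the term $||2u_h^1-u_h^0||^2_{L^2(\Omega_{t^1})}$ on the right and, after invoking the Euler-type stability of that first step (Lemma~\ref{stabnoncons}) to control $||u_h^1||^2$ in terms of $||u_h^0||^2$, the factor $(1+\Delta t\alpha_2^0)$ there. Absorbing the remaining current-level $2\Delta t(\alpha_1^{n+1}+\tfrac12\alpha_2^{n+1})$-type term and invoking the discrete Gronwall lemma under the smallness condition $\Delta t<(2\alpha_1^{n}+\alpha_2^{n})^{-1}$ yields the stated estimate.

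I expect the main obstacle to be the domain bookkeeping. Unlike the Euler and Crank--Nicolson cases, BDF-2 couples three consecutive iterates living on three distinct ALE domains $\Omega_{t^{n-1}},\Omega_{t^{n}},\Omega_{t^{n+1}}$, whereas the $G$-stability identity is purely algebraic and valid only in a single inner-product space; one must therefore fix a reference domain for each step, pull the other two iterates back through the ALE maps, and then track carefully the Jacobian factors this introduces in the telescoping sum. In particular the delicate points to verify are that the extra quantity $||2u_h^{n+1}-u_h^{n}||^2$ does not spoil the telescoping --- it folds into the Gronwall quantity $||u_h^{n}||^2+||2u_h^{n}-u_h^{n-1}||^2$ --- and that it costs only the advertised factor $2$ in the exponent. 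The remaining ingredients (coercivity, the Young inequality, the Reynolds-type change of variables, and discrete Gronwall) are routine and carried out as in Lemma~\ref{stabnoncons}.
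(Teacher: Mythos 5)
Your proposal is correct and follows essentially the same route as the paper: testing with $u_h^{n+1}$, the BDF-2 $G$-stability identity combined with the Reynolds-type transport of the level-$n$ quantities $\|u_h^n\|^2$ and $\|2u_h^n-u_h^{n-1}\|^2$ between $\Omega_{t^{n+1}}$ and $\Omega_{t^n}$ (the paper's equation~\eqref{simplfisterm}), coercivity plus Young's inequality, telescoping, and discrete Gronwall under $\Delta t<(2\alpha_1^n+\alpha_2^n)^{-1}$. Your bookkeeping is if anything slightly more careful than the paper's (e.g., transporting $2u_h^n-u_h^{n-1}$ rather than $u_h^n-u_h^{n-1}$ in the correction integral, and making explicit that the sum starts at the first full BDF-2 step), but there is no substantive divergence in method.
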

\begin{proof}
 Applying the backward difference temporal discretization to the semi-discrete equation
$(5)$ with the test function $u_h^{n+1},$  we get
\begin{align*}
&\left (\ds\frac{3}{2}u^{n+1}_h -  2u^{n}_h + \frac{1}{2}u^{n-1}_h, u^{n+1}_h \right)_{\Omega_{t ^{n+1}}} + \Delta t~ a^{n+1}_{SUPG}(u^{n+1}_h,u_h^{n+1})\\
&\qquad\qquad\quad - \frac{\Delta t}{2} \int_{\Omega_{t^{n+1}}}  \bw^{n+1}_h \cdot \nabla((u_h^{n+1})^2)~  dx  
   = \Delta t\int_{\Omega_{t^{n+1}}} f^{n+1} u_h^{n+1}~   dx \\
& \qquad\qquad\quad+ \Delta t\sum_{K  \in \mathcal{T}_{t^{n+1}}} \delta_{K}  \int_{K} f^{n+1} ~\left(\mathbf{b-w}^{n+1}_h\right) \cdot\nabla u_h^{n+1} ~ dK.
\end{align*}
The first term can be written as,
\begin{equation}\label{simplfisterm}
\begin{aligned}
 &\left(\frac{3}{2}u^{n+1}_h - 2u^{n}_h + \frac{1}{2}u^{n-1}_h, u^{n+1}_h \right)_{\Omega_{t^{n+1}}} = \frac{1}{4}\Big(||u^{n+1}_h||^2_{L^{2}(\Omega_{t^{n+1}})} - ||u^{n}_h||^2_{L^{2}(\Omega_{t^{n}})} \\ 
&\qquad + ||2u^{n+1}_h - u^{n}_h||^2_{L^{2}(\Omega_{t^{n+1}})}     
- ||2u^{n}_h - u^{n-1}_h||^2_{L^{2}(\Omega_{t^{n}})} + ||u^{n+1}_h - 2u^{n}_h + u^{n-1}_h||^2_{L^{2}(\Omega_{t^{n+1}})}\\
&\qquad + \int_{t^n}^{t^{n+1}} \int_{\Omega_t} \nabla\cdot\bw^{n+1}_h(t) \Big[(u^{n})^2 + \Big(u^{n} - u^{n-1}\Big)^2 \Big]\Big)
\end{aligned}
\end{equation}
substitute equation~\eqref{simplfisterm} for the first term and using the same estimates as we worked in previous sections, the fully discrete equation becomes
\begin{align*}
 \frac{1}{4}&\left(||u^{n+1}_h||^2_{L^{2}(\Omega_{t^{n+1}})} + ||2u^{n+1}_h - u^{n}_h||^2_{L^{2}(\Omega_{t^{n+1}})} +||u^{n+1}_h - 2u^{n}_h + u^{n-1}_h||^2_{L^{2}(\Omega_{t^{n+1}})}\right)\\
& \qquad +\frac{\Delta t}{4}|||u^{n+1}_h|||^2_{L^{2}(\Omega_{t^{n+1}})}
 \leq \frac{1}{4}\left(||u^{n}_h||^2_{L^{2}(\Omega_{t^{n}})} + ||2u^{n}_h - u^{n-1}_h||^2_{L^{2}(\Omega_{t^{n}})}\right)\\
&\qquad + \frac{1}{4} \int_{t^n}^{t^{n+1}} \int_{\Omega_t} \nabla\cdot\bw^{n+1}_h(t) \Big[(u^{n})^2 + (u^{n} - u^{n-1})^2 \Big]+\frac{2\Delta t}{\mu} ||f^{n+1}||^2_{{L_2(\Omega_{t^{n+1}})}}\\
&\qquad  + \frac{\Delta t}{2} ||\nabla\cdot\bw^{n+1}_h||_{L_{\infty}(\Omega_{t_{n+1}})}||u^{n+1}_h||^2_{L^{2}(\Omega_{t^{n+1}})}   +2\Delta t \sum_{K \in \mathcal{T}_{t^{n+1}}} \delta_{K} ||f^{n+1}||^2_{K}.
   \end{align*}
Summing over the index $i = 0, 1, 2, ...n,$ and using the same notations as in implicit Euler case, we get
\begin{align*}
&||u^{n+1}_h||^2_{L^{2}(\Omega_{t^{n+1}})} + ||2u^{n+1}_h - u^{n}_h||^2_{L^{2}(\Omega_{t^{n+1}})} +||u^{n+1}_h - 2u^{n}_h + u^{n-1}_h||^2_{L^{2}(\Omega_{t^{n+1}})}\\
&~~\qquad  +\Delta t \sum_{i=0}^n|||u^{i+1}_h|||^2_{L^{2}(\Omega_{t^{i+1}})} \leq 2 \Delta t \alpha_1^{n+1}||u^{n+1}_h||^2_{L^{2}(\Omega_{t^{n+1}})}+ ||u^{0}_h||^2_{L^{2}(\Omega_{t^{0}})} \\
&~~\qquad +||2u^{1}_h - u^{0}_h||^2_{L^{2}(\Omega_{t^{1}})}+ \Delta t \alpha_2^0||u^{0}_h||^2_{L^{2}(\Omega_{t^{0}})}+\Delta t \sum_{i=1}^n(2 \alpha_1^{i} + \alpha_2^{i})||u^{i}_h||^2_{L^{2}(\Omega_{t^{i}})}\\ 
&~~ \qquad  
 + 4\Delta t\sum_{i=1}^{n}\left(\frac{2}{\mu}+  \frac{\Delta t}{2}\right) \|f^{i+1}\|^2_{L^{2}(\Omega_{t^{i+1}})} \\
& ~ \qquad~ \leq (1+\Delta t \alpha_2^{0})||u^{0}_h||^2_{L^{2}(\Omega_{t^{0}})} +||2u^{1}_h - u^{0}_h||^2_{L^{2}(\Omega_{t^{1}})} +\Delta t \sum_{i=1}^{n+1}(2 \alpha_1^{i} + \alpha_2^{i})||u^{i}_h||^2_{L^{2}(\Omega_{t^{i}})}\\
& ~~\qquad + 4\Delta t\sum_{i=1}^{n+1}\left(\frac{2}{\mu}+  \frac{\Delta t}{2}\right) \|f^{i}\|^2_{L^{2}(\Omega_{t^i})}.
\end{align*}
By using the Grownwall's lemma, we get the stability estimate provided,
\begin{align*}
 \Delta t < \frac{1}{ \sup_{n \in [0,N]} (2 \alpha_1^n + \alpha_2^n)}.
\end{align*}

\end{proof}

\section{Numerical results}
This section presents the numerical results for the proposed SUPG finite element schemes.
We consider a boundary and interior layer problem in a time-dependent domain. The piecewise quadratic finite elements are used for the spatial discretization. The first order backward Euler and second order Crank-Nicolson method are used for the temporal discretization. Numerical solution obtained with the standard Galerkin and the SUPG method are presented. All computations are performed using an unstructured triangular mesh.

\subsection{Example}
In this example, a typical fluid structure interaction problem, that is a flow passing through a rectangular structure (beam), which deforms with time, has been considered. A predefined adaptive mesh with a high resolution near the deforming structure is considered. Nevertheless, the mesh is comparatively coarser away from the structure. Further, the tip of the beam is considered to be semi-circular to do away with the singularities that might occur due to the sharp corners. The mesh movement is handled using the arbitrary Lagrangian Eulerian (ALE) approach.

The flow is being directed by a prescribed sinusoidal movement of the beam, and is given by
\[
d = 0.75(x-0.5)^2 \sin(2\pi t/5) ,\quad \theta = \tan^{-1}\left(\frac{y}{x-0.5}\right).
\]
Here, the time-dependent coordinates $(x_1,x_2)$ are defined as 
\[ x(Y, t) =  \mathcal{A}_t(Y) : \left\{
  \begin{array}{l l}
    x_1 = Y_1 + 0.05(0.25~ d~ \tan \theta - Y_2~ \sin \theta) & \\
    x_2 = Y_2 + 0.05  d.
  \end{array} \right.    
\]
Now, let the time-dependent rectangular structure (beam)
\[
 \Omega_t^S :=\{(-0.5,0.5)\times(-0.5,0.5)\}~ \cup~ \{(0.5,4.5)\times(-0.03,0.03)\}
\]
and the two-dimensional channel that excludes the oscillating beam $\Omega_t^S$ is,
\[
 \Omega_t:=\{(-5,18)\times(-5,5)\}\setminus \bar{\Omega}_t^S.
\]
Further,  we define  $\Gamma_N:=
\{15\}\times(-5,5)$ as the   out flow boundary and $\Gamma_D:= \partial\Omega_t \setminus \Gamma_N$ as the Dirichlet boundary.

\begin{figure}[ht!]
\begin{center}
\unitlength1cm
\begin{picture}(11.5,6.)
\put(-.5,-0.5){\makebox(6,6){\includegraphics[scale=0.24]{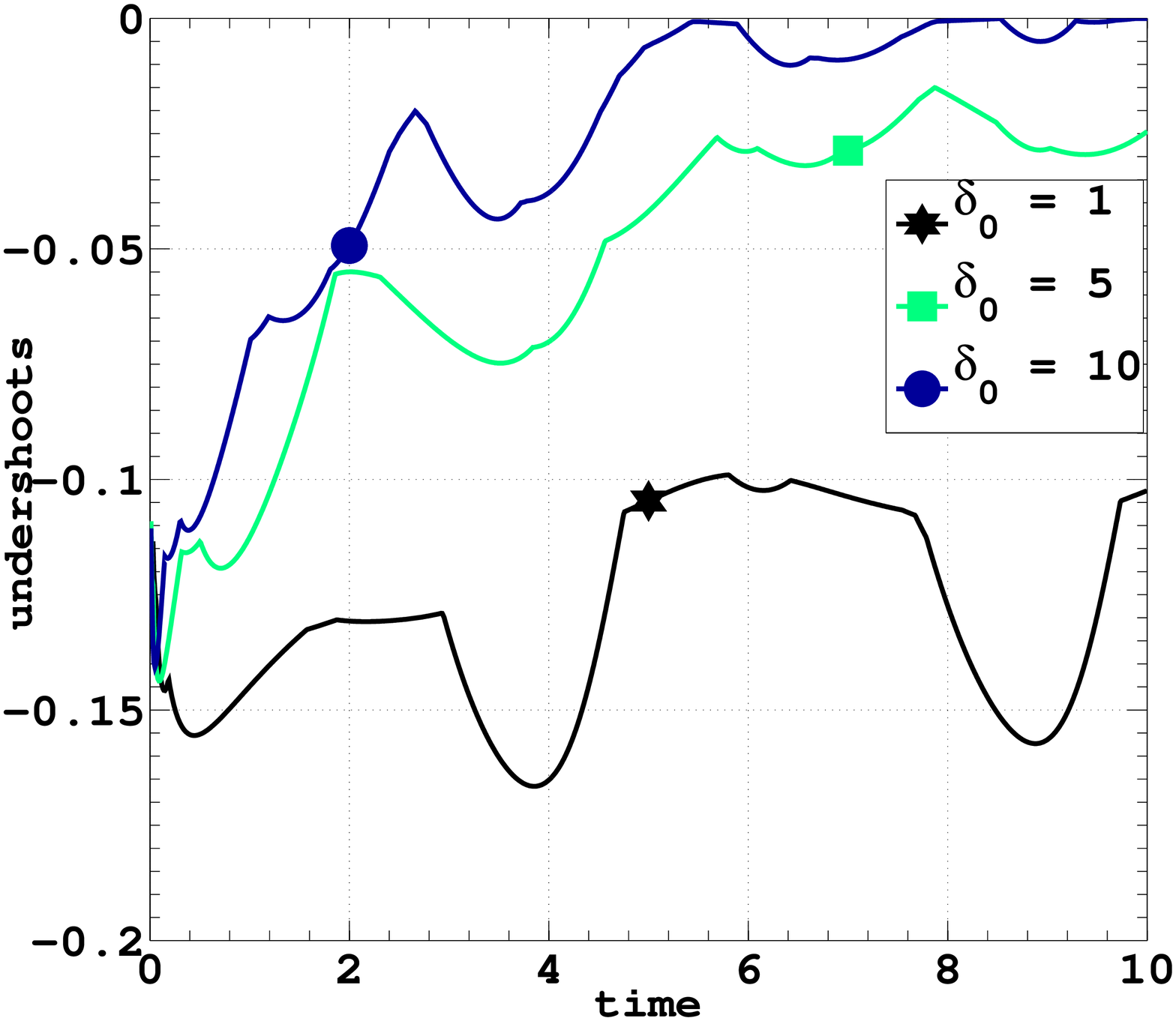}}}
\put(6,-0.5){\makebox(6,6){\includegraphics[scale=0.24]{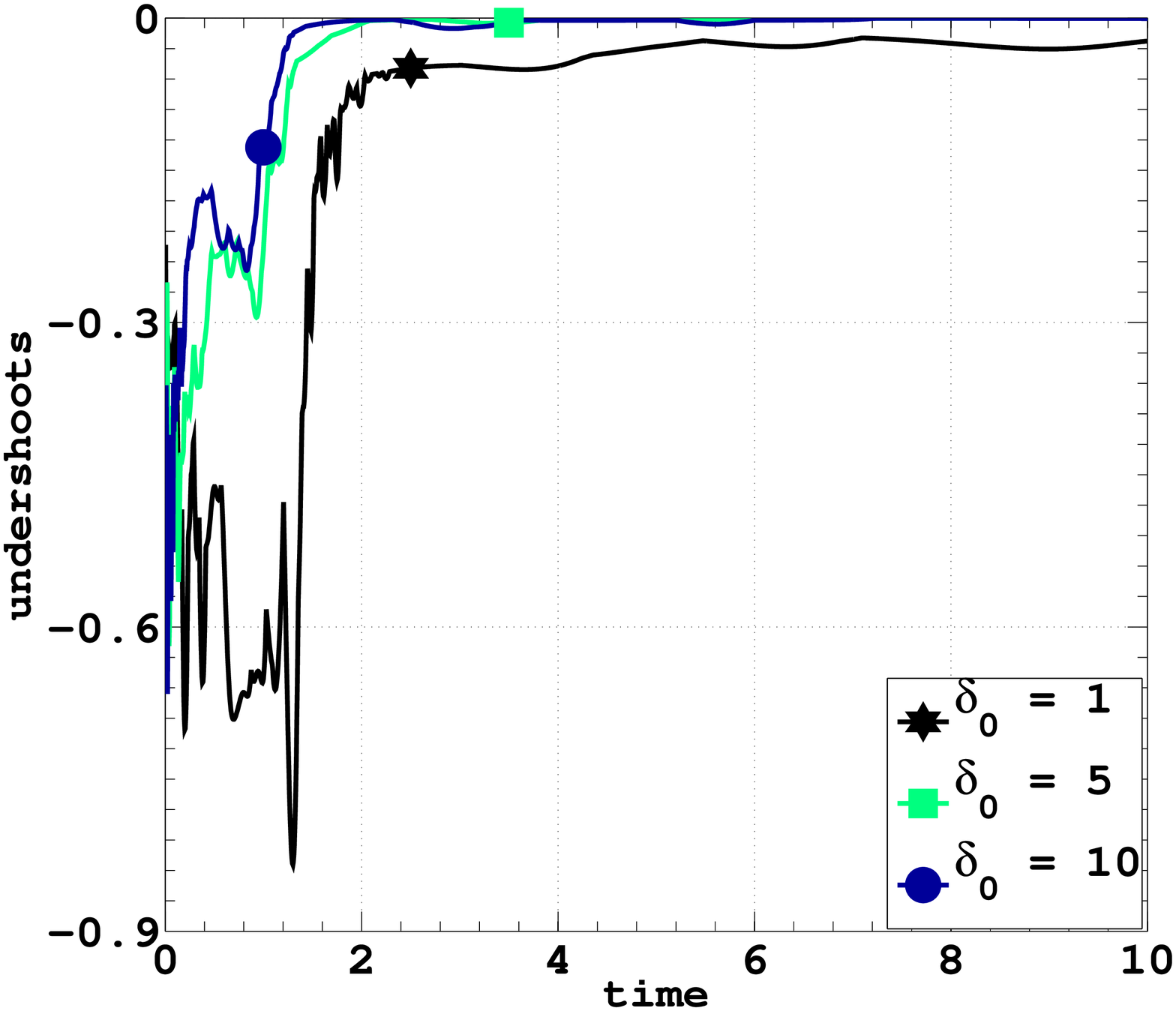}}}
\put(2,5.2){(a)}
\put(9,5.2){(b)}
\end{picture}
\end{center}
\caption{The observed undershoots in the SUPG solution for different values of $\delta_0$. Implicit Euler (a), and Crank-Nicolson (b). \label{Ex2Under}}
\end{figure}
\begin{figure}[ht!]
\begin{center}
\unitlength1cm
\begin{picture}(11.5,6.)
\put(-.5,-0.5){\makebox(6,6){\includegraphics[scale=0.24]{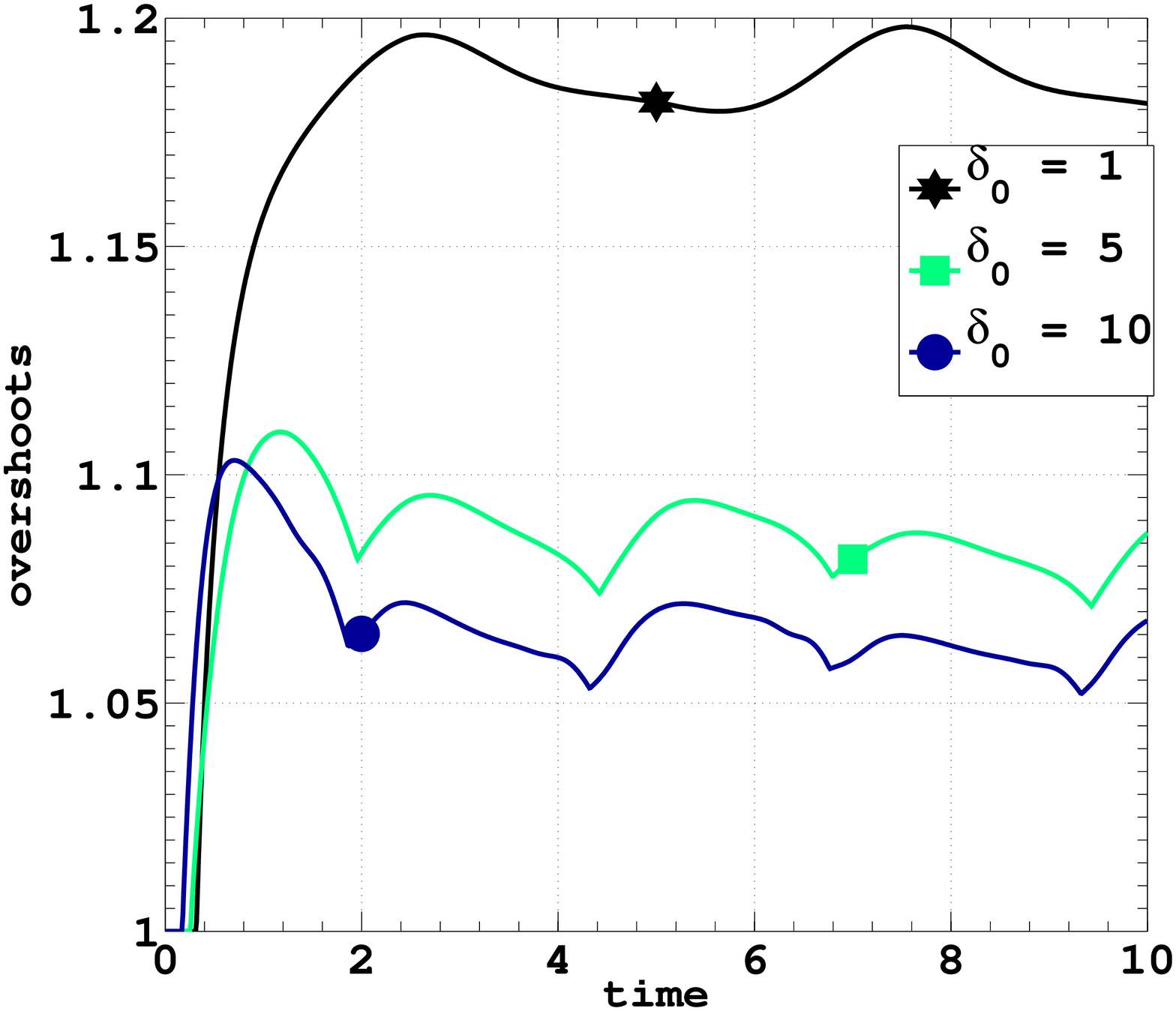}}}
\put(6,-0.5){\makebox(6,6){\includegraphics[scale=0.24]{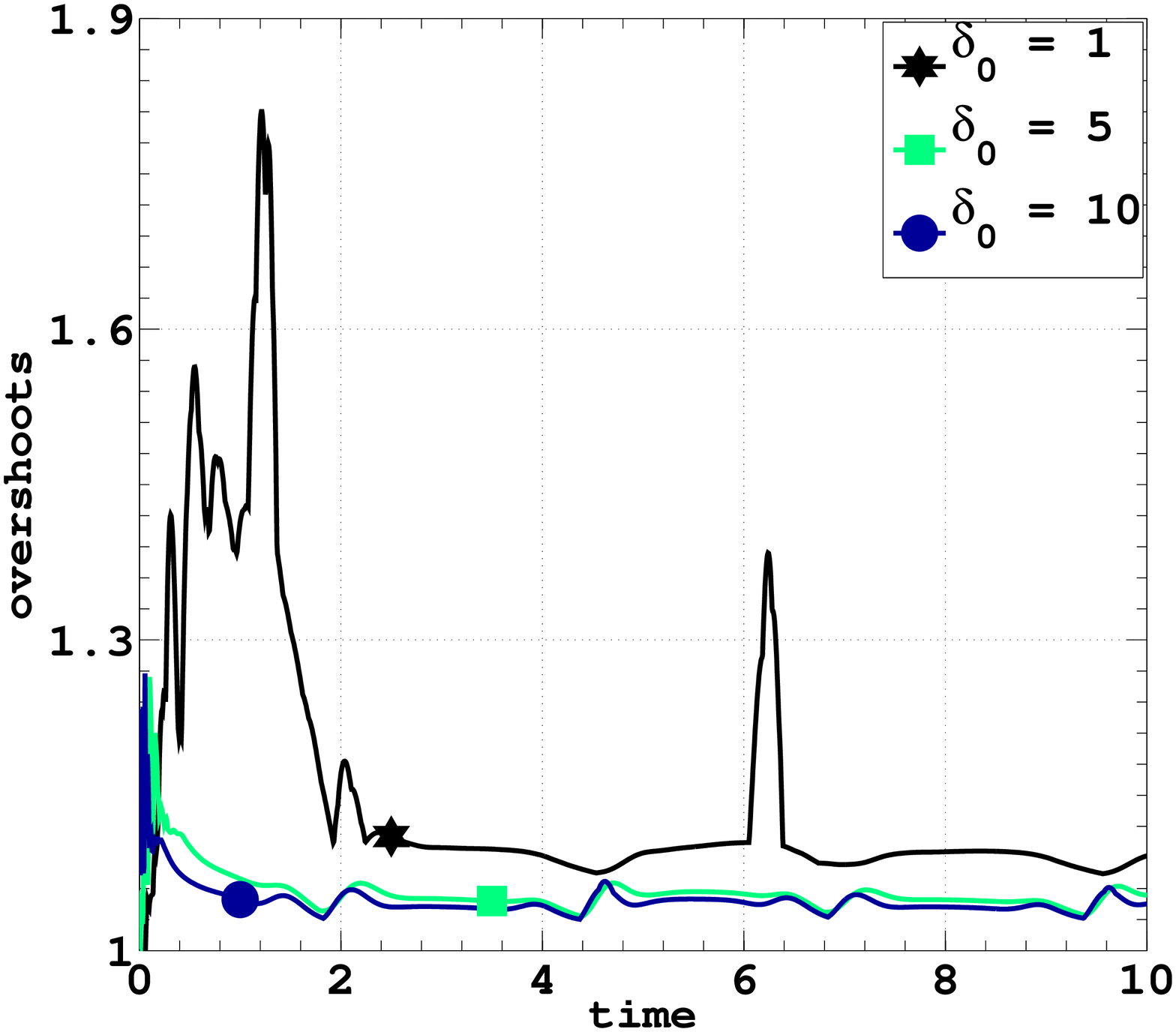}}}
\put(2,5.2){(a)}
\put(9,5.2){(b)}
\end{picture}
\end{center}
\caption{The observed overshoots in the SUPG solution for different values of $\delta_0$. Implicit Euler (a), and Crank-Nicolson (b). \label{Ex2Over}}
\end{figure}

 \begin{figure}[ht!]
\begin{center}
\unitlength1cm
\begin{picture}(11.5,6.)
\put(-.5,-0.5){\makebox(6,6){\includegraphics[scale=0.24]{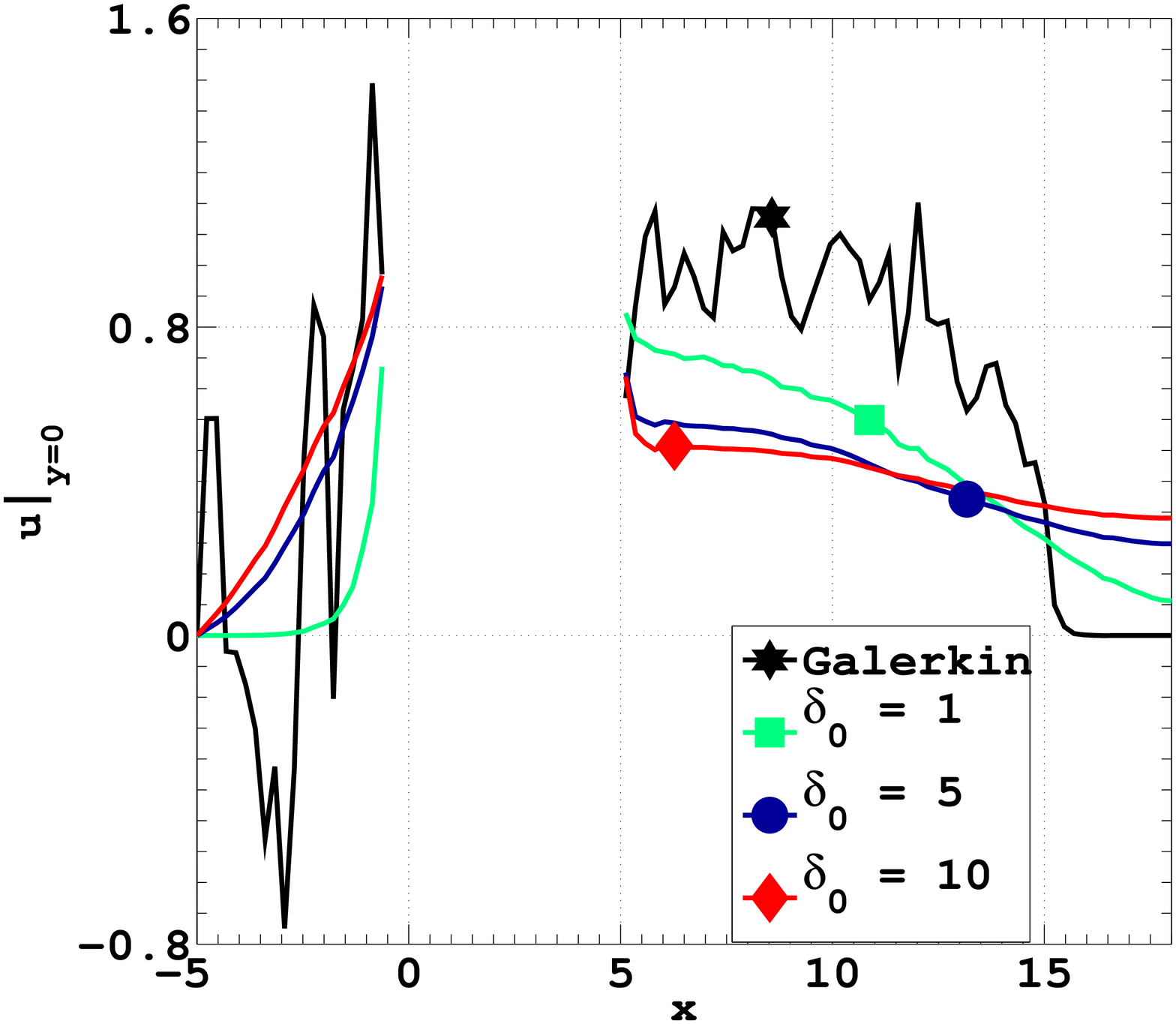}}}
\put(6,-0.5){\makebox(6,6){\includegraphics[scale=0.24]{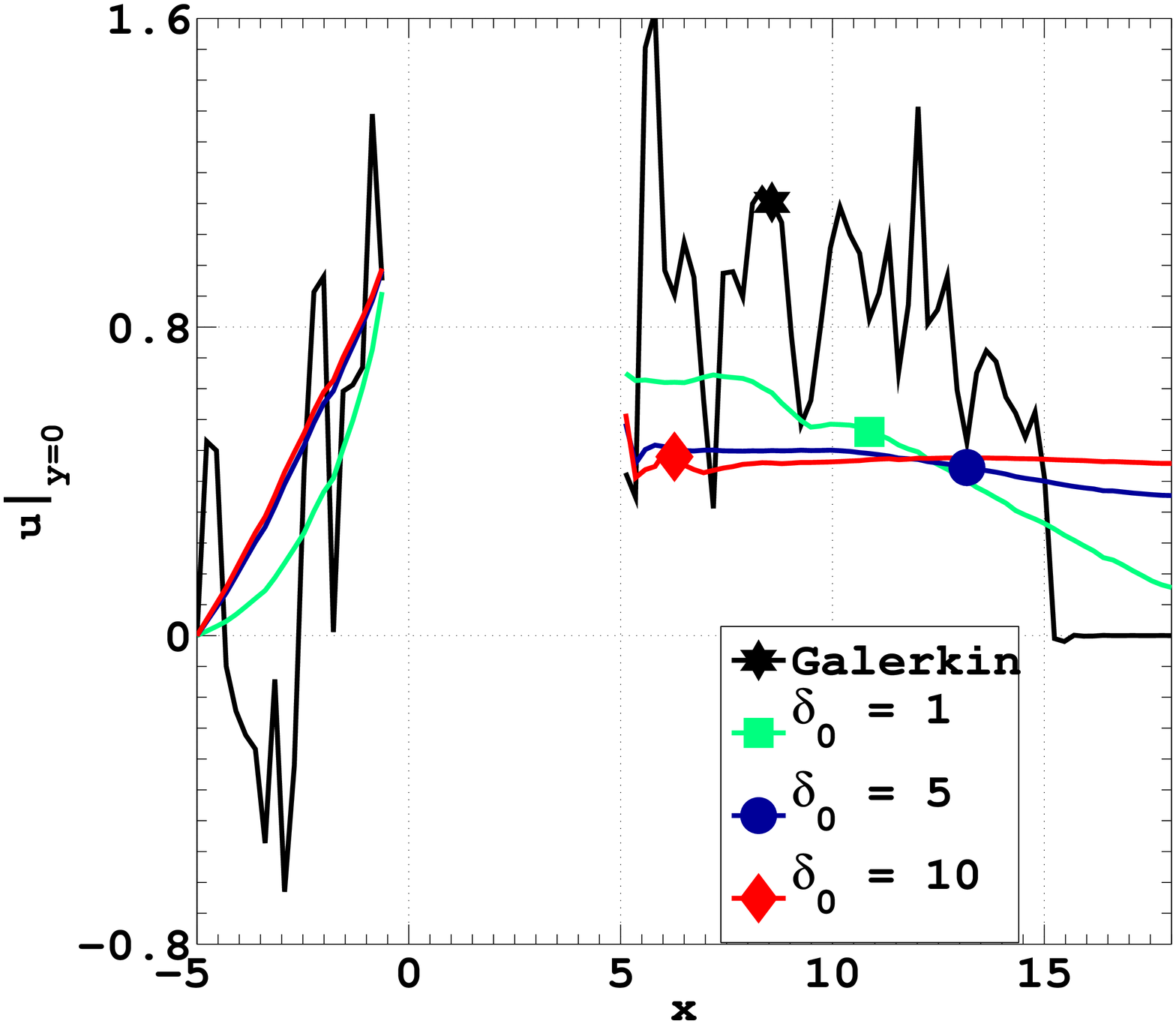}}}
\put(2,5.2){(a)}
\put(9,5.2){(b)}
\end{picture}
\end{center}
\caption{SUPG solution over the line $y=0$ of the Example for different values of $\delta_0$. Implicit Euler (a), and Crank-Nicolson (b). \label{Ex2y0}}
\end{figure}
We solve a scalar (energy) equation 
\[
 \ds\frac{\partial u}{\partial t} - \epsilon\Delta u +  \mathbf b \cdot\nabla u   = 0   \qquad \text{ in} \,\ (0,\rm{T}] \times \Omega_t 
\]
with $\epsilon=10^{-6}$ and  $\mathbf {b}(x_1,x_2)=(1,0)^T$. In this model, we impose the homogeneous Neumann condition on $\Gamma_N$, and      
\[
u_D(x_1,x_2) =
	\begin{cases}
	1 \qquad \text{on}~ \partial \Omega_t^S, \\
	0 \qquad   \text{else}
	\end{cases}
\]
on the Dirichlet boundary. 
Since the solid beam bends (up and down) periodically, the position of the boundary and the interior layers also changes with time.
The  computations are performed until the dimensionless time $\text{T}=10$ with the time step $\Delta t=0.01$. We use the elastic-solid update technique to handle the mesh movement that occurs due to the oscillations of the solid disc see~\cite{shw015}.  At each time step, we first compute the displacement of the disc. We then solve the linear elastic equation in $\Omega_{t^n}$ to compute the inner points' displacement by considering the displacement on $\partial \Omega_{t^{n+1}}^S$ as the Dirichlet value. This elastic update technique avoids the re-meshing during the entire simulation. 

As expected the solution obtained with the standard Galerkin discretization consists spurious oscillations and instabilities, see the Galerkin solution in figure~\ref{Ex2y0}. 
We next perform an array of computations with difference values of  $\delta_0$. The observed undershoots and the overshoots for different values of $\delta_0$  are plotted in Figure~\ref{Ex2Under} and~\ref{Ex2Over}, respectively. It is observed that the undershoots and overshoots are less in the Euler's method (note that the scaling of figures are different). Apart from that increasing value of $\delta_0$ will improve the undershoots/overshoots more in case of implicit Euler than that of Crank-Nicolson scheme. So the optimal choice (based on the undershoots and overshoots), for both the Euler and Crank-Nicolson methods is $\delta_0=5$. The sequence of solutions obtained with the SUPG discretization at different instances $t=0.05, 3.9, 6.2, 10$ are plotted in Figure~\ref{Ex2SupgEuler}. Though, the SUPG approximation suppressed the spurious oscillations in the numerical solution almost, there are very small  overshoots and undershoots (around $10 \%$) for the chosen $\delta_0=5$.  We could reduce these undershoots and overshoots by increasing of $\delta_0$ further, however, it will smear the solution. This is a well known behavior of the SUPG method in stationary domains. Nevertheless, the purpose of the stabilized approximation is achieved, and SUPG discretization can be used for boundary and interior layer problems in time-dependent domains to obtain a stable solution.

 \begin{figure}[ht!]
\begin{center}
\unitlength1cm
\begin{picture}(14,9.5)
\put(0,5.5){\makebox(6,6){\includegraphics[width=6cm]{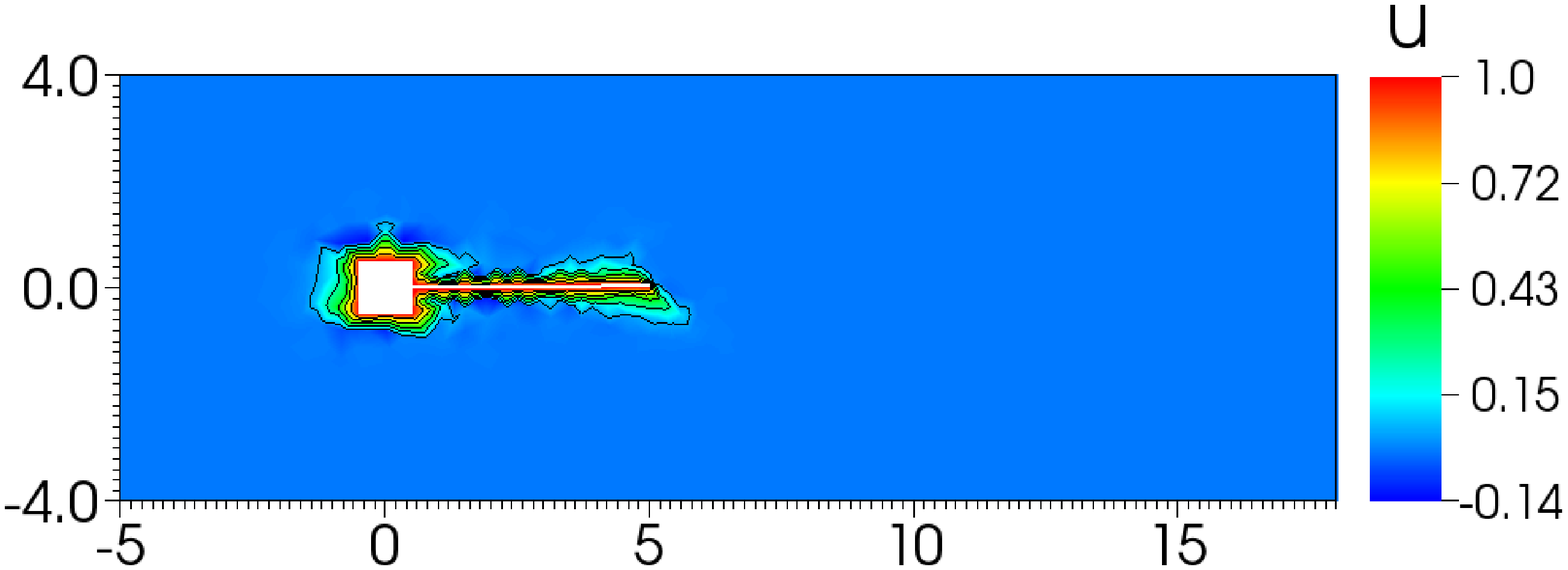}}}
 \put(6.21,5.5){\makebox(6,6){\includegraphics[width=6cm]{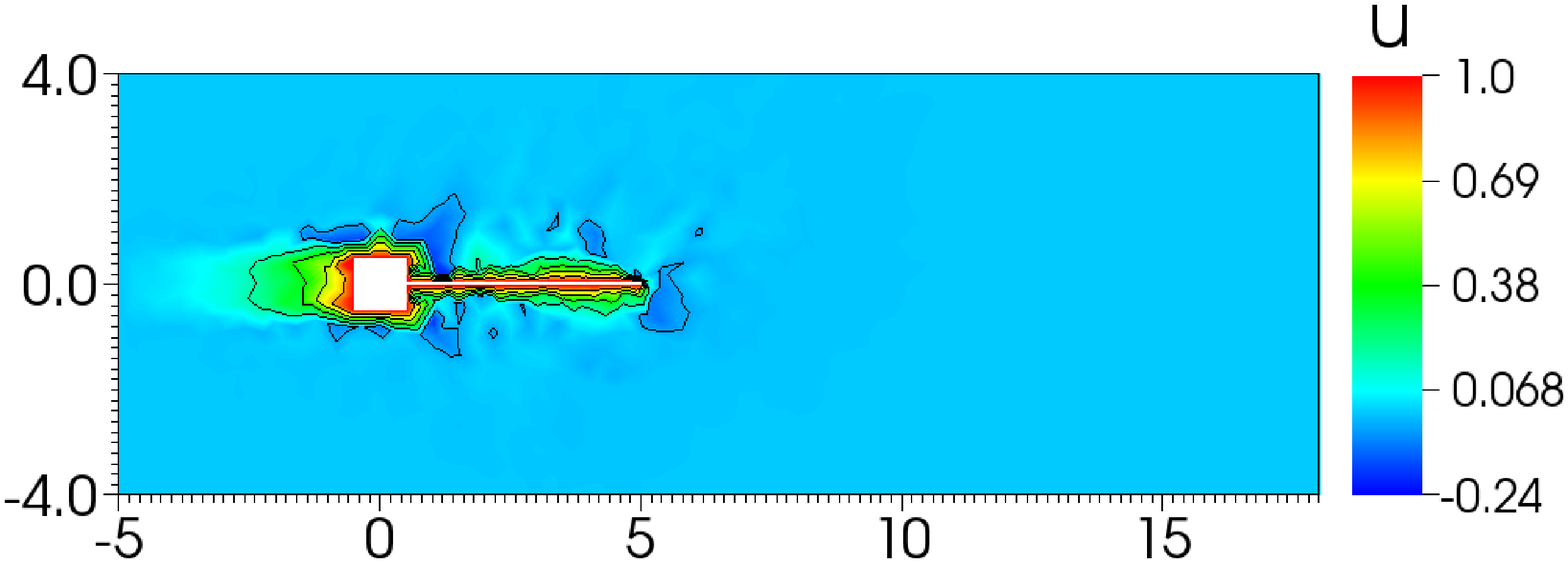}}}
\put(0,3.){\makebox(6,6){\includegraphics[width=6cm]{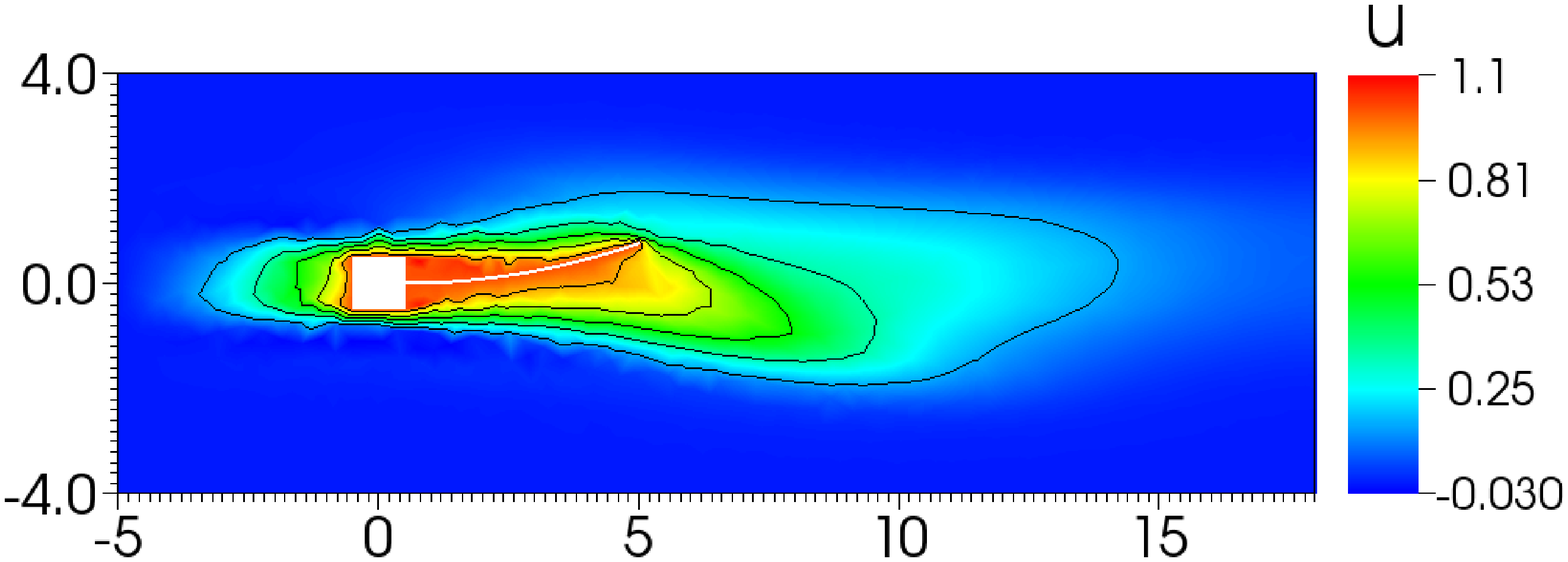}}}
 \put(6.21,3.){\makebox(6,6){\includegraphics[width=6cm]{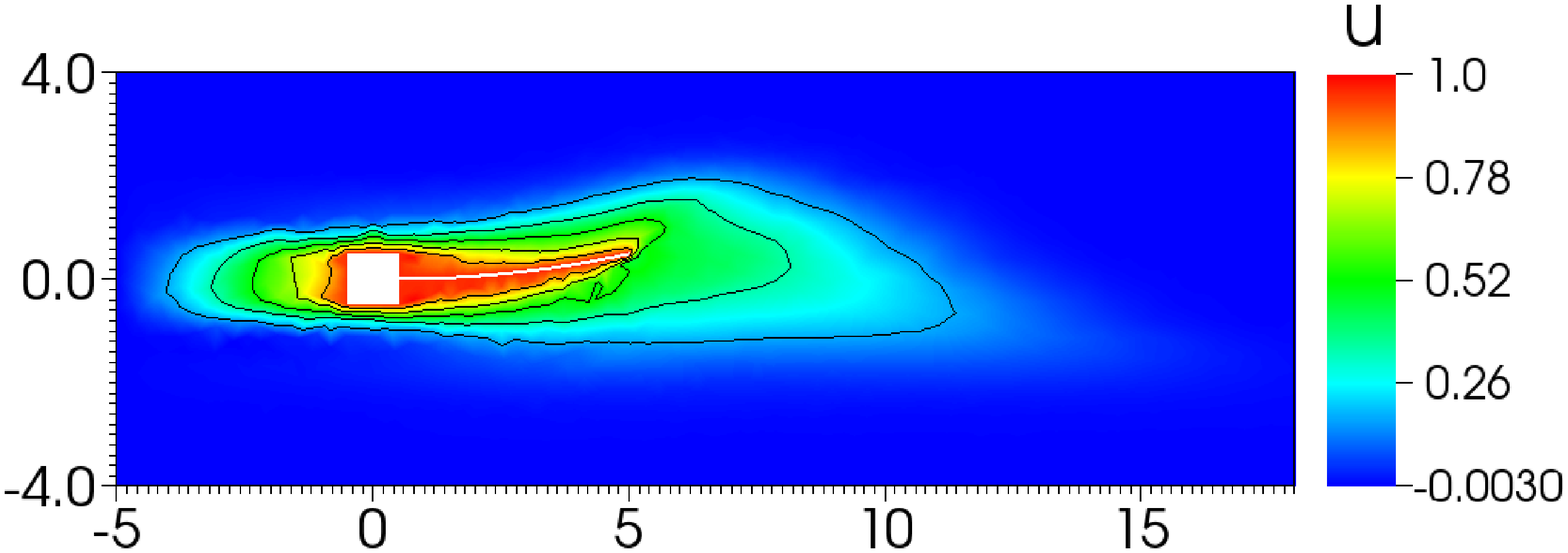}}}
\put(0,.5){\makebox(6,6){\includegraphics[width=6cm]{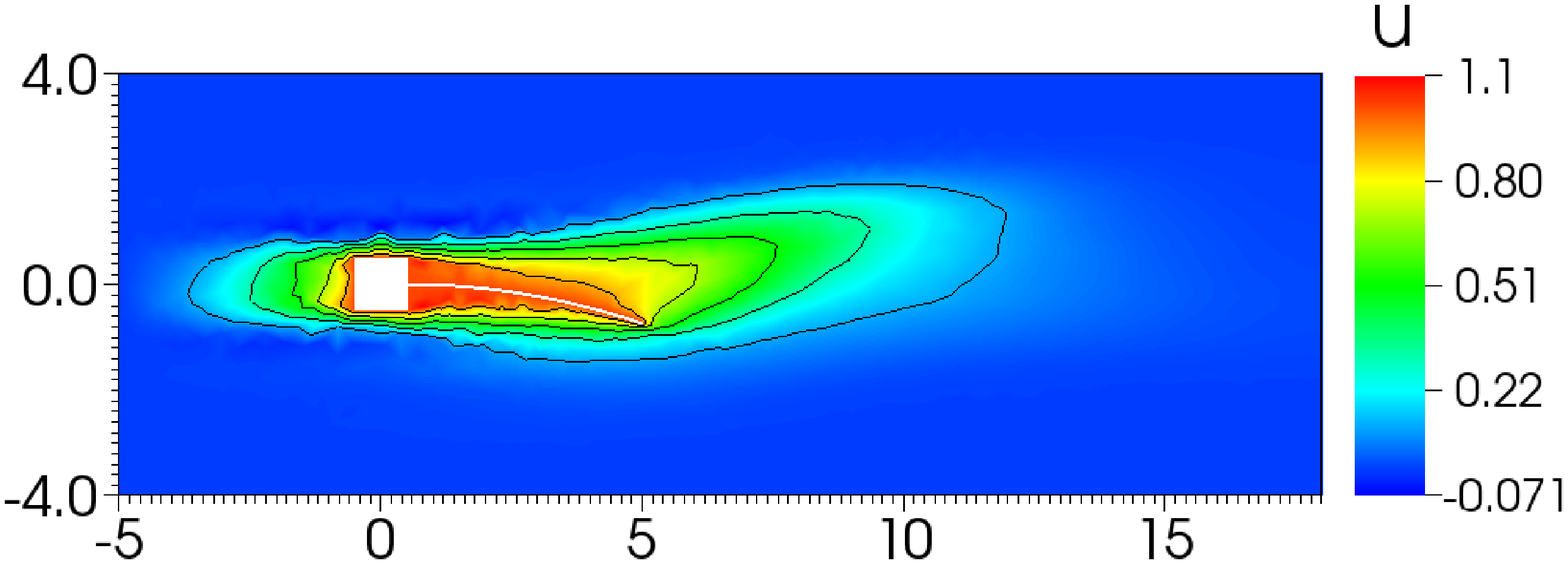}}}
 \put(6.21,.5){\makebox(6,6){\includegraphics[width=6cm]{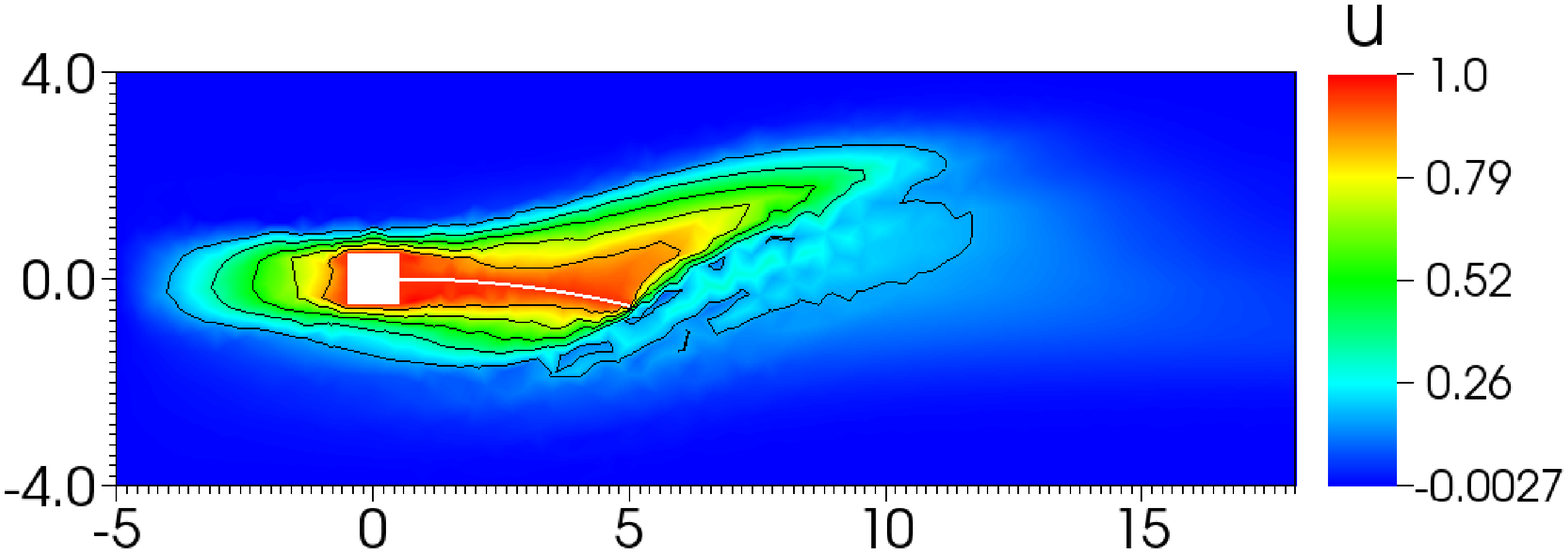}}}
\put(0,-2.){\makebox(6,6){\includegraphics[width=6cm]{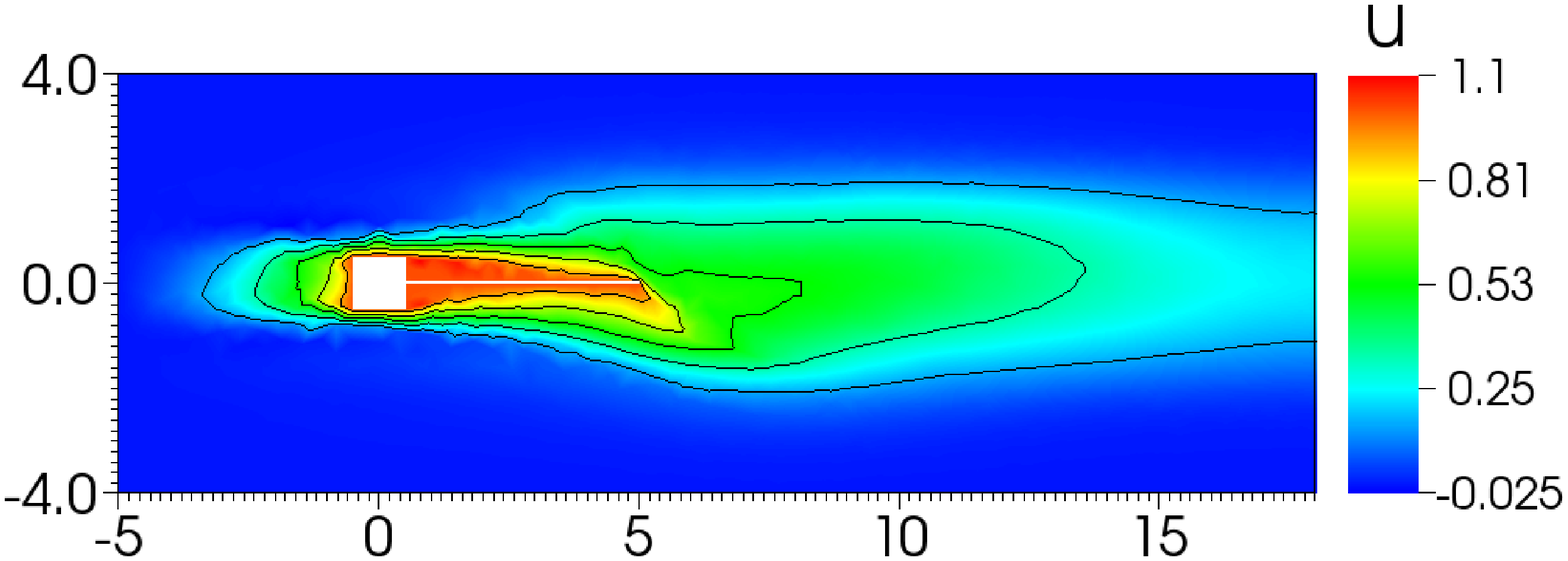}}}
\put(6.21,-2.){\makebox(6,6){\includegraphics[width=6cm]{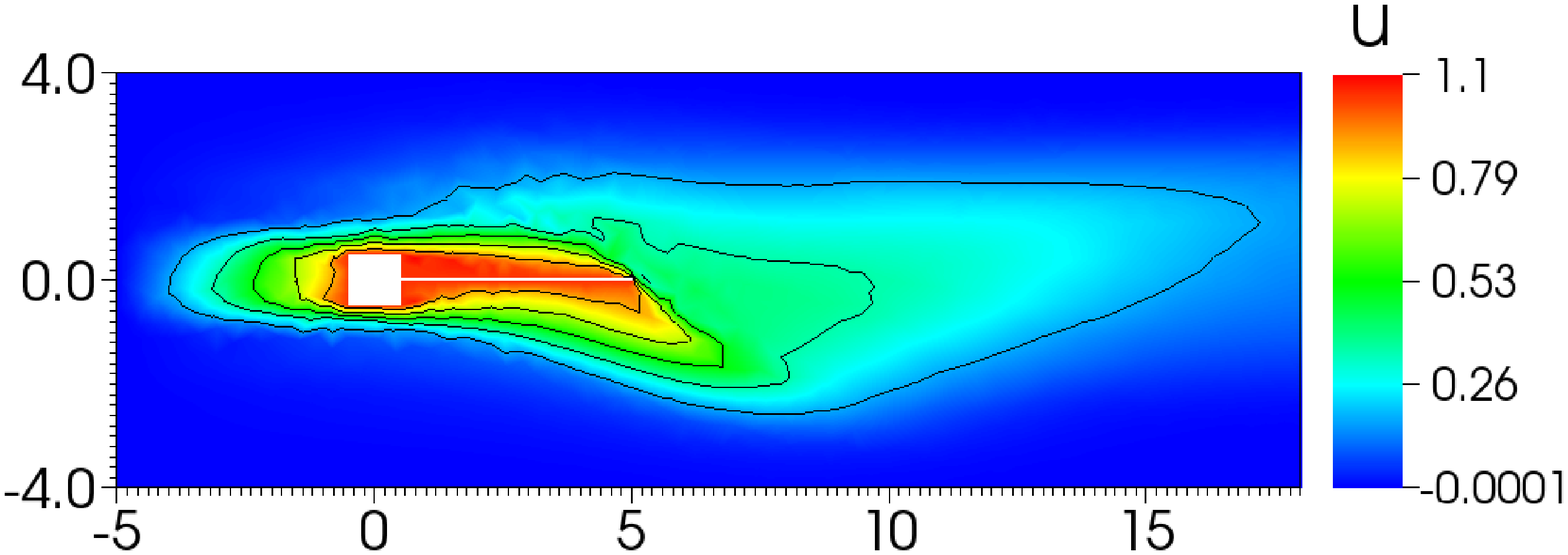}}}
\end{picture}
\end{center}
\caption{The sequence of solutions obtained with the SUPG $\delta_0 = 10$ method for Implicit Euler case(left) and Crank-Nicolson case(right) at different instance $t = 0.05, 3.9, 6.2, 10 $. \label{Ex2SupgEuler}}
\end{figure}
\section{Summary} ALE-SUPG finite element scheme for convection dominated transient convection-diffusion equation in time-dependent domains is presented in this paper. The deformation of the domain is handled using the arbitrary Lagrangian Eulerian (ALE) approach. The stability estimates of the  backward Euler, Crank-Nicolson and backward difference(BDF-2) temporal discretizations with non-conservative ALE-SUPG finite element method are derived. The scheme is validated with flow over an oscillating beam with convection dominant case. Further, the influence of the SUPG stabilization parameter on the solution with ALE-SUPG finite element scheme is presented. It is observed that the Crank-Nicolson scheme is less dissipative than the Implicit Euler method. Further, the influence of the SUPG stabilization method differs in the backward Euler and Cranck-Nicolson methods. The undershoots/overshoots in the backward Euler scheme is more sensitive to the stabilization parameter $\delta_k$ than in the Crank-Nicolson scheme.

 \bibliographystyle{spmpsci} 
 \bibliography{masterlit}

\begin{thebibliography}{10}
\providecommand{\url}[1]{{#1}}
\providecommand{\urlprefix}{URL }
\expandafter\ifx\csname urlstyle\endcsname\relax
  \providecommand{\doi}[1]{DOI~\discretionary{}{}{}#1}\else
  \providecommand{\doi}{DOI~\discretionary{}{}{}\begingroup
  \urlstyle{rm}\Url}\fi

\bibitem{BO04}
Boffi, D., Gastaldi, L.: Stability and geometric conservation laws for {ALE}
  formulations.
\newblock Comp. Meth. in App. Mech. and Engg \textbf{193}, 4717--4739 (2004)

\bibitem{BRA06}
Braack, M., Burman, E.: Local projection stabilization for the oseen problem
  and its interpretation as a variational multiscale method.
\newblock SIAM J. Numer. Anal. \textbf{43}(6), 2544–2566 (2006)

\bibitem{BH82}
Brooks, A.N., Hughes, T.J.R.: Streamline upwind/{P}etrov-{G}alerkin
  formulations for convection dominated flows with particular emphasis on the
  incompressible {N}avier-{S}tokes equations.
\newblock Comput. Methods Appl. Mech. Eng. \textbf{32}, 199--259 (1982)

\bibitem{BUR10}
Burman, E.: Consistent {SUPG}-method for transient transport problems:
  {S}tability and convergence.
\newblock Comput. Methods in Appl. Mech. and Engrg. \textbf{199}, 1114--1123
  (2010)

\bibitem{BUR07}
Burman, E., Ern, A.: Continuous interior penalty hp-finite element methods for
  advection and advection-diffusion equations.
\newblock Math. Comp. \textbf{76}, 1119--1140 (2007)

\bibitem{BUR04}
Burman, E., Hansbo, P.: Edge stabilization for {G}alerkin approximations of
  convection-diffusion-reaction problems.
\newblock Comput. Methods Appl. Mech. Eng. \textbf{193}, 1437--1453 (2004)

\bibitem{codina98}
Codina, R.: Comparison of some finite element methods for solving the
  diffusion--convection--reaction equation.
\newblock Comput. Methods Appl. Mech. Eng. \textbf{156}, 185--210 (1998)

\bibitem{CODINA2000}
Codina, R.: Stabilization of incompressibility and convection through
  orthogonal sub--scales in finite element methods.
\newblock Comput. Methods Appl. Mech. Eng. \textbf{190}, 1579--1599 (2000)

\bibitem{FRANCA92}
Franca, L., Frey, S., Hughes, T.: Stabilized finite element methods: I.
  application to the advective-diffusive model.
\newblock Comput. Methods Appl. Mech. Engrg. \textbf{95}, 253--276 (1992)

\bibitem{shw015}
Ganesan, S., Srivastava, S.: A{L}{E}-{S}{U}{P}{G} finite element method for
  convection-diffusion problems in time-dependent domains: Conservative form.
\newblock arXiv:1404.3531 pp. 1--17 (2015)

\bibitem{GAN10}
Ganesan, S., Tobiska, L.: Stabilization by local projection for
  convection-diffusion and incompressible flow problems.
\newblock J. Sci. Comput. \textbf{43}(3), 326--342 (2010)

\bibitem{Guermond99}
Guermond, J.L.: Stabilization of {G}alerkin approximations of transport
  equations by subgrid modeling.
\newblock M2AN Math. Model. Numer. Anal. \textbf{33}(6), 1293--1316 (1999)

\bibitem{JOH10}
John, V., Novo, J.: Error analysis of the {SUPG} finite element discretization
  of evolutionary convection-diffusion-reaction equations.
\newblock SIAM J. Numer. Anal. \textbf{49}(3), 1149--1176 (2011)

\bibitem{MAC12}
Mackenzie, J.A., Mekwi, W.R.: An unconditionally stable second--order accurate
  {ALE--FEM} scheme for two--dimensional convection--diffusion problems.
\newblock IMA J Numer Anal. \textbf{32}(3), 888--905 (2012)

\bibitem{MAT07}
Matthies, G., Skrzypacz, P., Tobiska, L.: A unified convergence analysis for
  local projection stabilisations applied to the {O}seen problem.
\newblock Math. Model. Numer. Anal. \textbf{41}, 713--742 (2007)

\bibitem{NOB01}
Nobile, F.: Numerical approximation of fluid-structure interaction problems
  with application to haemodynamics.
\newblock Ph{D} thesis, \'{E}cole Polytechnique F\'{e}d\'{e}rale de Lausanne
  (2001)

\bibitem{TEZ86}
Tezduyar, T., Ganjoo, D.: {P}etrov {G}alerkin formulations with weighting
  functions dependent upon spatial and temporal discretization: applications to
  transient convection-diffusuion problems,.
\newblock Comput. Methods Appl. Mech. Engrg. \textbf{59}, 49--71 (1986)

\bibitem{TEZ2000}
Tezduyar, T.E., Osawa, Y.: Finite element stabilization parameters computed
  from element matrices and vectors.
\newblock Comput. Methods Appl. Mech. Engrg. pp. 411--430 (2000)

\end{thebibliography}

%
%
%
\end{document}